 \newtheorem{thm}{Theorem}
 \newtheorem{cor}[thm]{Corollary}
 \newtheorem{lem}[thm]{Lemma}
 \newtheorem{prop}[thm]{Proposition}
 \theoremstyle{definition}
 \newtheorem{defn}[thm]{Definition}
 \newtheorem{ex}[thm]{Example}
 \newtheorem{rem}[thm]{Remark}
\newcommand{\eq} [1] {\begin{equation}\label{#1}\quad}
\newcommand{\en} {\end{equation}}
\newcommand{\scal}[1]{\langle#1\rangle}
\newcommand{\tr}{\operatorname{trace}}
\newcommand{\C}{\mathbb{C}}
\newcommand{\F}{\mathbb{F}}
\newcommand{\Fext}{\F_{q^2}}
\newcommand{\R}{\mathbb{R}}
\newcommand{\Z}{\mathbb{Z}}
\newcommand{\associated}{base }
\newcommand{\mf}[1]{\mathbf{#1}}
\newcommand{\vv}{\mathbf{v}}
\newcommand{\uu}{\mathbf{u}}
\newcommand{\ww}{\mathbf{w}}
\newcommand{\M}{{M_n(\F_{q^2})}}
\newcommand{\MM}{{M_2(\F_{q^2})}}
\newcommand{\gfw}{{\Gamma_{F}^\wedge}}
\newcommand{\gf}{{\Gamma_{F}}}
\newcommand{\Real}{{\textmd{Re }}}
\newcommand{\ve}{{\textbf{vec}}}
\begin{document}
%
%

\title{On the geometry of numerical ranges over finite fields}

\author[Camenga]{Kristin A.~Camenga}
\address{Department of Mathematics\\
Juniata College\\
Huntingdon, PA\\ USA}
\email{camenga@juniata.edu}

\author[Collins]{Brandon Collins}
\address{Department of Mathematics\\ University of Nebraska at Omaha\\
6001 Dodge Street\\
Omaha, NE 68182\\
USA}
\email{brandoncollins@unomaha.edu}

\author[Hoefer]{Gage Hoefer}
\address{Department of Mathematics\\ University of Nebraska at Omaha\\
6001 Dodge Street\\
Omaha, NE 68182\\
USA}
\email{ghoefer@unomaha.edu}

\author[Quezada]{Jonny Quezada}
\address{Department of Mathematics\\ University of Nebraska at Omaha\\
6001 Dodge Street\\
Omaha, NE 68182\\
USA}
\email{jonnyquezada@unomaha.edu}

\author[Rault]{Patrick X.~Rault}
\address{Department of Mathematics\\ University of Nebraska at Omaha\\
6001 Dodge Street\\
Omaha, NE 68182\\
USA}
\email{prault@unomaha.edu}

\author[Willson]{James Willson}
\address{Department of Mathematics\\ University of Nebraska at Omaha\\
6001 Dodge Street\\
Omaha, NE 68182\\
USA}
\email{jwillson@unomaha.edu}

\author[Yates]{Rebekah B.~Johnson Yates}
\address{Department of Mathematics\\Houghton College\\1 Willard Ave.\\Houghton, NY 14744\\USA}
\email{rebekah.yates@houghton.edu}


\thanks{Work by the second [BC] and sixth [JW] authors has been supported by the University of Nebraska at Omaha Kerrigan Research Minigrants Program.  Work by the first [KAC], fifth [PXR], and seventh [RBJY], authors has been supported by the American Institute of Mathematics REUF continuation program (NSF-DMS 1620073).  The authors thank Kazuki Makino of Juniata College for his honors thesis work which led to insights on the conditions needed for Lemmas \ref{lem:eigenvalue} and \ref{lem:vert}.
}
\subjclass{Primary 15A60}

\keywords{Numerical range, field of values, $2\times 2$ matrices, boundary generating curve, finite fields}

\dedicatory{}

\begin{abstract}
 Numerical ranges over a certain family of finite fields were classified in 2016 by a team including our fifth author \cite{CJKLR}.  Soon afterward Ballico generalized these results to all finite fields and published some new results about the cardinality of the finite field numerical range \cite{Ballico1, Ballico2}.  In this paper we study the geometry of these finite fields using the boundary generating curve, first introduced by Kippenhahn in 1951 \cite{Ki, Ki08}. We restrict our study to square matrices of dimension 2, with at least one eigenvalue in $\mathbb F_{q^2}$.
 \end{abstract}

\maketitle

\section{Introduction}\label{s:pre}

Let $q$ be an odd prime power and $\F_{q}$ the finite field with $q$ elements.  Note that $\F_{q^2}$ is a 2-dimensional vector space over $\F_q$, similar to $\C$ over $\R$.  Furthermore, there exists some $\alpha$ in $\F_q$ which is not a perfect square, and a $\beta$ in $\F_{q^2}$ such that $\beta^2=\alpha$, and we have $\F_{q^2}=\F_q[\beta]$ as an algebraic field extension. The Frobenius morphism $x\mapsto x^q$ on $\F_{q^2}$ preserves $\F_q$ and behaves like complex conjugation in $\C$; for simplicity we write in this paper $\overline x := x^q$ for any $x$ in $\F_{q^2}$.  We use this to define the conjugate transpose of a matrix $A$ in $\M$:  $A^*$ is the matrix obtained by transposing $A$ and applying the Frobenius morphism to each entry of $A$.  We define the Hermitian form $\scal{\cdot,\cdot}$ on $\F_{q^2}^n$ by $\scal{\mathbf u,\mathbf v}:=\mathbf v^* \mathbf u$.

\begin{defn}
For a matrix $A\in \M$, the \textbf{numerical range of $A$}, as defined in \cite{CJKLR}, is the set
\begin{equation*}
W(A)=\left\{\scal{A\textbf v,\textbf v}\colon \scal{\textbf v,\textbf v}=1, \textbf v\in \F_{q^2}^n\right\}.
\end{equation*}
\end{defn}

The classical complex number numerical range was first studied by Kippenhahn \cite{Ki, Ki08} and has been completely classified for $n\times n$ matrices up to dimension $n=4$ \cite{KRS, ChiNa12, CDRSSY}.  The most recent of these classifications relies on the boundary generating curve of the matrix \cite{ChiNa12, CDRSSY}.  We will now define this curve in detail, as it is pivotal to the results in this paper.  {Note that the first part of this next definition builds on how we traditionally define the real (or $\F_q$) and imaginary (or $\beta \F_q$) parts of a number $x$, as $\textmd{Re }x=(x+\overline x)/2$ and $\textmd{Im }x=(x-\overline x)/(2\beta)$.}

\begin{defn}\label{defn:assoccurve} Let $A\in \M$ and define $\textmd{Re }A=\frac{A+A^*}{2}$, and $\textmd{Im } A=\frac{A-A^*}{2\beta}$; here $\beta\in \F_{q^2}$ such that $\F_{q^2}=\F_q[\beta]$.  For simplicity we write $H_1=\textmd{Re }A$ and $H_2=\textmd{Im }A$, and note that $H_1$ and $H_2$ are both \textbf{Hermitian}, in the sense that for a Hermitian matrix $H$ we have $H^*=H$.  Note further that $A=H_1+\beta H_2$, and that $H_1$ is called the \textbf{Hermitian part} of $A$.

Let
\begin{equation*}
F_A(x:y:t) = \det\left(xH_1 + yH_2+tI_n\right),
\end{equation*}
which is a degree $n$ homogeneous polynomial, and let $\Gamma_{F_A}$ denote its zero set in projective space $\mathbb{P}^2(\F_{q^2})$.  $F_A$ is called the \textbf{\associated polynomial} and $\Gamma_{F_A}$ is called the \textbf{\associated curve} for the matrix $A$. The dual curve to $\Gamma_{F_A}$, denoted $\Gamma^\wedge_{F_A}$, is called the \textbf{boundary generating curve}. When $A$ is clear, we will write $F = F_A$, $\Gamma_{F_A} = \Gamma _F$ and $\Gamma^\wedge_{F_A} = \Gamma^\wedge_{F}$. Additionally, for a line $L$ tangent to $\Gamma_{F_A}$ (respectively, point $P$ on $\Gamma_{F_A}$) we write $\widehat L$ for its dual point on $\gfw$ (respectively, tangent line $\widehat P$ to $\gfw$); note that this duality notation works in the reverse direction as well.\end{defn}

 While $F_A$ will have coefficients in the base field $\F_q$ for any $n\times n$ matrix $A$ since it is the determinant of a Hermitian matrix,  $H_1$ and $H_2$ do not necessarily have coefficients in the base field $\mathbb F_q$.  Indeed, in the two-dimensional case such a Hermitian matrix will take the form
$$\begin{bmatrix}
a & b+\beta c \\
b-\beta c & d
\end{bmatrix},$$
for some $a,b,c,d\in \F_q$.

The $\F_q$-affine points on the boundary generating curve have a natural embedding into $\F_{q}^2$, which is isomorphic to $\F_{q^2}$ as a vector space.  Kippenhahn showed that in the situation of classical numerical ranges over $\C$, the numerical range is the convex hull of this curve \cite{Ki, Ki08}.

For much of the remainder of the paper we will utilize tools that require us to restrict to the situation of square matrices of dimension 2 with at least one eigenvalue in $\mathbb F_{q^2}$. We will prove the following result, which mirrors a result of Kippenhahn's in the classical situation of complex number numerical ranges. Note that our proof technique also applies in the classical complex number situation.

\begin{prop}\label{thm:boundary}
  Let $q$ be an odd prime power, let $A\in M_2(\F_{q^2})$, and assume that $F_A$ is nonsingular.  Then the boundary generating curve is a subset of the numerical range:  $\gfw\subseteq W(A)$.
\end{prop}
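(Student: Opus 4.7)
My strategy is to take each $\F_q$-affine point $P=(\gamma,\delta)$ of $\gfw$ and explicitly produce a unit vector $\ww\in\F_{q^2}^2$ satisfying $\scal{A\ww,\ww}=\gamma+\beta\delta$, which is exactly the embedding of $P$ into $\F_{q^2}$. By duality, the projective point $(\gamma:\delta:1)$ on $\gfw$ corresponds to the tangent line $L\colon \gamma x+\delta y+t=0$ to the nonsingular conic $\Gamma_F$. Because a smooth conic has a unique tangent point for each tangent line and both $L$ and $\Gamma_F$ are defined over $\F_q$, the tangent point $(x_0:y_0:t_0)$ is Galois-fixed and therefore lies in $\mathbb{P}^2(\F_q)$. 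This crucial reduction forces $M_0:=x_0H_1+y_0H_2+t_0I$ to be an $\F_q$-linear combination of Hermitian matrices, so $M_0$ is itself Hermitian; it is singular (since $F(x_0,y_0,t_0)=0$) and nonzero (else $\nabla F$ would vanish at a point of $\Gamma_F$, contradicting nonsingularity of $F$), hence $M_0$ has rank $1$.

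Next I compute the gradient of $F$ at $(x_0,y_0,t_0)$ in terms of a kernel vector of $M_0$. Let $\vv\in\F_{q^2}^2\setminus\{0\}$ span $\ker M_0$. Since $M_0$ is Hermitian of rank $1$, its image equals $\vv^\perp$ (with respect to $\scal{\cdot,\cdot}$), and $\mathrm{adj}(M_0)$ is itself a Hermitian rank-$1$ matrix whose kernel equals $\mathrm{image}(M_0)=\ker(\vv\vv^*)$; comparing ranks and kernels forces $\mathrm{adj}(M_0)=c\,\vv\vv^*$ for some $c\in\F_q^*$. The standard identity $\partial_xF|_{M_0}=\tr(\mathrm{adj}(M_0)\,H_1)$, and its analogues in $y,t$, then yield
$$\nabla F(x_0,y_0,t_0)=c\bigl(\scal{H_1\vv,\vv},\ \scal{H_2\vv,\vv},\ \scal{\vv,\vv}\bigr).$$
Matching this direction against $L$ gives $\gamma=\scal{H_1\vv,\vv}/\scal{\vv,\vv}$ and $\delta=\scal{H_2\vv,\vv}/\scal{\vv,\vv}$; in particular $\scal{\vv,\vv}\neq 0$.

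Finally I normalize. Because the norm map $z\mapsto z\bar z$ from $\F_{q^2}^*$ to $\F_q^*$ is surjective, there exists $\mu\in\F_{q^2}^*$ with $\mu\bar\mu=1/\scal{\vv,\vv}$. Setting $\ww:=\mu\vv$ makes $\scal{\ww,\ww}=1$, and scaling each Hermitian form by $\mu\bar\mu$ yields
$$\scal{A\ww,\ww}=\scal{H_1\ww,\ww}+\beta\scal{H_2\ww,\ww}=\gamma+\beta\delta,$$
so $P\in W(A)$.

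The step I expect to be hardest is the identification $\mathrm{adj}(M_0)=c\,\vv\vv^*$: it exploits the $2\times 2$ dimension together with the Hermitian structure, and it relies critically on the earlier reduction that the tangent point lies over $\F_q$ rather than merely over $\F_{q^2}$, without which $M_0$ would not even be Hermitian. A secondary technical point to handle carefully is verifying that $c\in\F_q^*$ (so that the entries of $\nabla F$ are compatible with rescaling by real scalars), which follows because both $\mathrm{adj}(M_0)$ and $\vv\vv^*$ are Hermitian.
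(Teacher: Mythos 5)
Your proof is correct, but it takes a genuinely different route from the paper's. The paper reduces to the case of a vertical tangent line: it first proves (Lemma \ref{lem:vert}) that a point of $\gfw$ with tangent line $x=r$ lies in $W(A)$, by recognizing $r$ as a simple eigenvalue of $H_1$, invoking Lemma \ref{lem:eigenvalue} to get an eigenvector $\vv$ with $\scal{\vv,\vv}\neq 0$, and checking via Jacobi's formula that $\scal{A\vv,\vv}$ is the claimed point; it then handles a general point by multiplying by a rotation-scaling $\rho$ that makes the tangent line vertical, using $\Gamma^\wedge_{F_{\rho A}}=\rho\,\Gamma^\wedge_{F_A}$ (Lemma \ref{lem:hAalt}) and $W(\rho A)=\rho W(A)$ (Lemma \ref{lem:linearity}). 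You instead work directly at the tangency point of the dual line on $\Gamma_F$: since a smooth conic meets a tangent line in a unique point and both are defined over $\F_q$, that point is rational, so $M_0=x_0H_1+y_0H_2+t_0I$ is a rank-one Hermitian matrix; the identification $\mathrm{adj}(M_0)=c\,\vv\vv^*$ with $c\in\F_q^*$ then turns Jacobi's formula into the statement that the gradient is proportional to $(\scal{H_1\vv,\vv},\scal{H_2\vv,\vv},\scal{\vv,\vv})$, from which both $\scal{\vv,\vv}\neq 0$ and the value $\scal{A\vv,\vv}/\scal{\vv,\vv}=\gamma+\beta\delta$ drop out at once. Your argument is more uniform --- it treats all tangent directions identically and extracts the crucial nonvanishing of $\scal{\vv,\vv}$ from nonsingularity of the gradient rather than from the eigenvalue-multiplicity analysis of Lemma \ref{lem:eigenvalue} --- and it makes transparent why rationality of the tangency point (hence Hermitianness of $M_0$) is the real content. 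What the paper's longer route buys is reusable machinery: the linearity lemmas \ref{lem:hAalt} and \ref{lem:translate} and the vertical-tangent analysis are needed again in Sections \ref{s:densitygfw}--\ref{s:conics}, whereas your adjugate computation is tailored to this one proposition (and, as you note, leans on $n=2$ for the rank-one structure, though the paper's argument is equally two-dimensional).
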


{The assumption that $F_A$ is nonsingular mostly rules out the cases when $A$ is unitarily reducible.  However, we will see in Section \ref{s:exceptional} some other situations where this polynomial and the curve $\Gamma_F$ that it defines can be singular.}

While this curve does not define a boundary in the same sense as over the complex numbers, it does have some geometric significance.  In particular, we will show the following in the case of dimension two.  Note that in the classical complex number situation, the numerical range will be an elliptical disc, with boundary generating curve the boundary ellipse.  In this situation our proof technique can be applied to show that the numerical range is indeed a set of scalings of the boundary generating curve; we leave this to the reader.

Lastly, note also that while \textbf{unitary} is a common description of a matrix, we use the term in the following theorem to refer to a unit scalar $u$ with the special property that $u\overline u = |u|^2 = 1$; later we will use $\mathcal U$ to refer to the set of these unitary scalars. Note that since $|u|^2 = u^{q + 1} = 1$ has $q + 1$ solutions (\cite[Theorem 1.5 (iii), page 4]{Hirschfeld} and \cite[Theorem 2.47(ii) and Theorem 2.49]{Lidl}), we have $|\mathcal{U}| = q + 1$.

The result below establishes the final element of geometry of these numerical ranges.

\begin{thm}\label{thm:scalings}
    Let $q$ be an odd prime power, let $A\in M_2(\F_{q^2})$, and assume that {$F_A$ is nonsingular.}  {Furthermore, assume that there is at least one eigenvalue in $\F_{q^2}$ with a corresponding eigenvector $\uu$ of A such that $\scal{\uu,\uu}\neq 0$.}
  Then $\gfw$ is a Hermitian ellipse or hyperbola of the form $\mathcal C_1$, with (after transformations) $\mathcal C_i$ defined as
  $$\mathcal C_i:\frac{(x-c)^2}{a}-\alpha \frac{y^2}{b}=d_i$$ for some $a,b,c,d_i\in \F_q$ with $ab\neq 0$.
   Furthermore, there exist scalars $d_i\in \F_q$ such that $$W(A)=\bigcup\limits_{i=1}^{(q+1)/2} \mathcal C_{i},$$ where elements of the union are disjoint.
   When $\mathcal C_{1}$ is an ellipse, we have $d_2=0$, $|\mathcal C_{i}|=q+1$ for $i\neq 2$, and there are exactly $(q^2+1)/2$ points in $W(A)$.
   When $\mathcal C_{1}$ is a hyperbola, we have $|C_i|=q-1$ for each $i$, and there are exactly $(q^2-1)/2$ points in $W(A)$.
\end{thm}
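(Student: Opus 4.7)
The plan is to exploit the eigenvalue hypothesis to unitarily triangularize $A$, then compute $W(A)$ explicitly and analyze its structure via a discriminant.

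I first rescale the given eigenvector $\uu$ so that $\scal{\uu,\uu}=1$, which is possible because the norm map $\F_{q^2}^{\times}\to\F_q^{\times}$ is surjective. The vector $\ww:=(\overline{u_2},-\overline{u_1})^T$ is then automatically orthogonal to $\uu$ and satisfies $\scal{\ww,\ww}=|u_1|^2+|u_2|^2=1$. Unitary conjugation into the basis $\{\uu,\ww\}$ puts $A$ into upper triangular form
$$\tilde A=\begin{pmatrix}\lambda&\gamma\\0&\mu\end{pmatrix}$$
with $\lambda,\mu,\gamma\in\F_{q^2}$. Since translation by $-\mu I$ shifts $W(A)$ by $-\mu$, I may set $\mu=0$ and recover the general case at the end.

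For a unit vector $\vv=(v_1,v_2)^T$, writing $s:=|v_1|^2\in\F_q$ and $z:=v_2\overline{v_1}\in\F_{q^2}$ yields
$$\scal{\tilde A\vv,\vv}=\lambda s+\gamma z,\qquad |z|^2=s(1-s).$$
I will check that every pair $(s,z)$ satisfying these conditions comes from some unit vector, so
$$W(\tilde A)=\{\lambda s+\gamma z:s\in\F_q,\,z\in\F_{q^2},\,|z|^2=s(1-s)\}.$$
Assuming $\gamma\ne 0$, eliminating $z=(w-\lambda s)/\gamma$ converts the constraint $|z|^2=s(1-s)$ into the $\F_q$-coefficient quadratic
$$s^2(|\lambda|^2+|\gamma|^2)-s\bigl(2\Real(\overline{\lambda}w)+|\gamma|^2\bigr)+|w|^2=0,$$
so $w\in W(\tilde A)$ iff its discriminant $\Delta(w)\in\F_q$ is a square (zero included). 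Since $\F_q$ contains exactly $(q+1)/2$ squares including zero, $W(A)$ is a disjoint union of exactly $(q+1)/2$ level sets $\{\Delta=d_i\}$ with $d_i$ a square, which will become the $\mathcal C_i$.

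Writing $w=p+\beta q$ with $p,q\in\F_q$, a direct computation shows $\Delta$ is a quadratic polynomial in $(p,q)$ with unique critical point at $w_0=\lambda/2$ and critical value $C':=|\gamma|^2(|\lambda|^2+|\gamma|^2)$; the discriminant of the quadratic part of $\Delta$ lies in the square class of $\alpha(|\lambda|^2+|\gamma|^2)$. Completing the square produces an affine $\F_q$-change of variables in which each level set $\{\Delta=d\}$ takes the canonical form $\frac{(x-c)^2}{a}-\alpha\frac{y^2}{b}=d_i$ of the theorem (with $d_i:=d-C'$ and specific $a,b,c\in\F_q$). The form is anisotropic (``ellipse'') precisely when $|\lambda|^2+|\gamma|^2$ is a square in $\F_q$ and hyperbolic (``hyperbola'') when it is a nonsquare; the same dichotomy determines whether $C'$ is a square. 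To identify $\gfw$ with a particular $\mathcal C_1$, I compute $F_A=\det(xH_1+yH_2+tI)$ for the triangular form, dualize, and match the resulting conic against the canonical form.

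The point counts then follow from standard conic cardinalities over $\F_q$. In the ellipse case $C'$ is a square, so the degenerate level $\Delta=C'$ lies in $W(A)$: labeled $\mathcal C_2$ (with shifted right-hand side $d_2=0$), it contributes a single point; the remaining $(q-1)/2$ non-degenerate anisotropic conics are norm-form level sets in $\F_q^2$ and each contain $q+1$ points, totaling $(q^2+1)/2$. In the hyperbola case $C'$ is a nonsquare and is excluded from the valid right-hand sides, leaving $(q+1)/2$ non-degenerate hyperbolic level sets, each equivalent to $xy=c$ with $c\ne 0$ and thus containing $q-1$ points, totaling $(q^2-1)/2$. The main obstacle is the explicit identification of $\gfw$ with the chosen $\mathcal C_1$: computing the dual of the Kippenhahn conic from its Hermitian defining matrix and matching coefficients with the canonical form from the discriminant analysis is algebraically heavy but mechanical.
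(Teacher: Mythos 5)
Your route is genuinely different from the paper's in one substantive respect. The paper reduces $A$ by unitary similarity, scaling, and translation to the canonical forms $\left[\begin{smallmatrix}0&1\\0&0\end{smallmatrix}\right]$ and $\left[\begin{smallmatrix}1&\zeta\\0&0\end{smallmatrix}\right]$, shows that each value of the numerical range map lies on a scaling $\mathcal C_m$ of $\gfw$ with $m=1-(j-k)^2(|\zeta|^2+1)/|\zeta|^2$, proves the bound $|S_z/\mathcal U|\le 2$, and then obtains the reverse inclusion and the point counts only through the global cardinality argument $|S/\mathcal U|=q(q-1)$. Your discriminant criterion is built on the same quadratic in $s=|v_1|^2$ (your $\Delta(w)$ is, up to an affine change, the paper's $(j-k)^2$, and its level sets are the $\mathcal C_m$), but phrasing membership as ``$w\in W(\tilde A)$ iff $\Delta(w)$ is a square'' yields $W(A)=\bigcup\mathcal C_i$ and the exact cardinalities directly, with no global count; it also hands you the density statement of Theorem \ref{thm:densityF} for free (one root $s$ when $\Delta=0$, two when $\Delta$ is a nonzero square). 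That is a real streamlining, and the realizability check for pairs $(s,z)$ that you defer does go through via surjectivity of the norm map.

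There are, however, concrete gaps. First, your square-class computation is wrong: the discriminant of the quadratic part of $\Delta(p+\beta q)$ is $64\,\alpha\,|\gamma|^2\,(|\lambda|^2+|\gamma|^2)$, and since the norm $|\gamma|^2$ ranges over all of $\F_q^{\times}$ (nonsquares included), the ellipse/hyperbola dichotomy is governed by the square class of $|\gamma|^2(|\lambda|^2+|\gamma|^2)=C'$, not of $|\lambda|^2+|\gamma|^2$. This matches the paper's criterion that $\gfw$ is an ellipse iff $(1+|\zeta|^2)|\zeta|^2$ is a square; your downstream statement ``ellipse iff $C'$ is a square'' is correct, but both intermediate claims it rests on fail whenever $|\gamma|^2$ is a nonsquare. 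Second, the equivalence ``$w\in W$ iff $\Delta(w)$ is a square'' presupposes that the leading coefficient $|\lambda|^2+|\gamma|^2$ is nonzero (and that $\gamma\ne 0$); if it vanishes the equation in $s$ is linear and the structure collapses into exactly the singular case of Section \ref{s:exceptional} (the paper's $|\zeta|^2=-1$), where $W(A)$ is the complement of a line and is not a union of scalings of $\gfw$. You must show that nonsingularity of $F_A$ excludes both degenerations --- it does, via the explicit computation of $F_A$ for the triangular form --- and that same computation is what you need anyway for the step you defer, identifying $\gfw$ with the level set $\Delta=0$. None of this is hard, but as written the argument is incomplete without it.
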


Note that the second assumption in this proposition was necessary to account for some complications over finite fields, as the Hermitian form does not define an inner product so $\mathbf x\mapsto \scal{\mathbf x,\mathbf x}$ does not define a norm on $\mathbb F_{q^2}$.  Note also that we will drop the adjective Hermitian from our conics for the remainder of this paper.

While it is not the case that $W(A)$ is the convex hull of $\gfw$, this theorem does establish a similar geometric relationship.  Lastly, we combine the two previous results into the following statement about density.  The reader may note that our proof technique can also be used to prove that in the classical situation of complex number numerical ranges, the density is one on the boundary.

\begin{thm}\label{thm:densityF}
  Let $q$ be an odd prime power, let $A\in M_2(\F_{q^2})$, and assume that {$F_A$ is nonsingular.}  {Furthermore, assume that there is at least one eigenvalue in $\F_{q^2}$ with a corresponding eigenvector $\uu$ of A such that $\scal{\uu,\uu}\neq 0$.}
Let $z\in W(A)$ and let $S_z=\{\mathbf v\mid \scal{A \mathbf v, \mathbf v}=z, \; \scal{\vv,\vv}=1\}$, i.e. the pre-image of the numerical range map at $z$. Let $\mathcal U=\{k+\beta \ell\in \F_{q^2}\mid k^2-\alpha \ell^2=1\}$, the set of unitary elements in $\F_{q^2}$. If $z \in \gfw$, then we have $|S_z/\mathcal U| = 1$. If $z\not\in \gfw$ then we have $|S_z/\mathcal U|=2$. Equivalently, if $z \in \gfw$, then we have $|S_z| = q + 1$, and if $z \not\in \gfw$, then $|S_z| = 2q + 2$. 
\end{thm}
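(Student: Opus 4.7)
The plan is to parametrize the fibers $S_z$ by a single quadratic equation over $\F_q$, and then to combine the resulting bound $|S_z/\mathcal U|\leq 2$ with a global count of unit vectors and the cardinalities from Theorem~\ref{thm:scalings}.

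First I would record that $\mathcal U$ acts freely on $S_z$ by scalar multiplication, since $\scal{u\vv,u\vv}=u\bar u\scal{\vv,\vv}$ and $\scal{A(u\vv),u\vv}=u\bar u\scal{A\vv,\vv}$; hence $|S_z|=(q+1)\,|S_z/\mathcal U|$, and it suffices to count orbits. Using the hypothesized eigenvector $\uu$ with $\scal{\uu,\uu}\neq 0$, rescale to $\scal{\uu,\uu}=1$ and extend to an orthonormal basis (both possible by surjectivity of the norm map $\F_{q^2}^*\to\F_q^*$, together with the fact that $\uu^\perp$ is 1-dimensional with non-degenerate induced form); then unitary invariance of $W(A)$ and $\gfw$ lets me reduce to the case of an upper triangular matrix $A$ with diagonal entries $\lambda,\mu\in\F_{q^2}$ and off-diagonal entry $a$. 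For $\vv=(v_1,v_2)$ with $\scal{\vv,\vv}=1$, I would introduce the $\mathcal U$-invariants $x=|v_1|^2\in\F_q$ and $t=v_2\bar v_1\in\F_{q^2}$; a direct check via the norm-map fibers shows that $\mathcal U$-orbits correspond bijectively to pairs $(x,t)$ with $|t|^2=x(1-x)$, and in these coordinates $\scal{A\vv,\vv}=z$ becomes the linear relation $(\lambda-\mu)x+at=z-\mu$.

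A short determinant calculation then shows that nonsingularity of $F_A$ is equivalent to $a\neq 0$ together with $|\lambda-\mu|^2+|a|^2\neq 0$; solving the linear relation for $t$ and substituting into $|t|^2=x(1-x)$ yields a genuine quadratic in $x$ with leading coefficient $|\lambda-\mu|^2+|a|^2$, hence $|S_z/\mathcal U|\leq 2$ on all of $W(A)$. Stratifying the unit sphere by $(|v_1|^2,|v_2|^2)$ and using that the norm map on $\F_{q^2}$ has fibers of size $q+1$ over $\F_q^*$ and size $1$ over $0$ gives $|\{\vv:\scal{\vv,\vv}=1\}|=q(q-1)(q+1)$, whence $\sum_{z\in W(A)}|S_z/\mathcal U|=q(q-1)$. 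Setting $n_k=|\{z\in W(A):|S_z/\mathcal U|=k\}|$, the system $n_1+n_2=|W(A)|$, $n_1+2n_2=q(q-1)$ combined with $|W(A)|=(q^2\pm 1)/2$ from Theorem~\ref{thm:scalings} forces $n_1=q\pm 1=|\gfw|$ in both the ellipse and hyperbola cases.

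By Proposition~\ref{thm:boundary} we already have $\gfw\subseteq W(A)$, so it remains only to verify the inclusion $\gfw\subseteq\{z:|S_z/\mathcal U|=1\}$; the cardinality match $n_1=|\gfw|$ then forces equality and finishes the proof. This last inclusion is the main obstacle. The most direct route is to compute the discriminant of the quadratic from the previous step as a polynomial in $(\re z,\im z)$ and verify that it cuts out the same conic as $\gfw$, using the dual-conic formula applied to the matrix of $F_A$ in the chosen upper triangular frame. A more geometric alternative is to re-use the construction from the proof of Proposition~\ref{thm:boundary}: each $z\in\gfw$ arises from an eigenvector of a Hermitian pencil $cH_1+sH_2$, and nonsingularity of $F_A$ forces this eigenvector to be unique up to scalar, hence up to $\mathcal U$.
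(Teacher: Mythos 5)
Your reduction to a single quadratic $(|\lambda-\mu|^2+|a|^2)x^2+\cdots=0$ via the $\mathcal U$-invariants $x=|v_1|^2$, $t=v_2\bar v_1$ is correct and is a genuinely cleaner route to the bound $|S_z/\mathcal U|\le 2$ than the paper's, which treats the circular case ($\lambda=\mu$) and the general conic case separately and argues by contradiction with three preimages; your characterization of nonsingularity of $F_A$ as $a\neq 0$ and $|\lambda-\mu|^2+|a|^2\neq 0$ also checks out against the paper's condition $|\zeta|^2\notin\{0,-1\}$. The orbit count $|S|/|\mathcal U|=q(q-1)$ matches the paper's. However, the decisive step --- that $|S_z/\mathcal U|=1$ for every $z\in\gfw$ --- is not actually proved: you name it as ``the main obstacle'' and offer two candidate routes without carrying out either. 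Route (a), identifying the discriminant locus of your quadratic with the dual conic $\gfw$, would in fact give the whole theorem in one stroke (discriminant zero, nonzero square, or nonsquare corresponding to $1$, $2$, or $0$ orbits, making the counting argument unnecessary), but that identification is exactly the computation that carries the content of the theorem, and it is missing. Route (b) is essentially the paper's own argument in Section \ref{s:densitygfw}: rotate by $\rho$ so the tangent at $z$ becomes vertical, use Lemma \ref{lem:eigenvalue} to see that $\re\rho z$ is a simple eigenvalue of $\re\rho A$ with a unit eigenvector, and show that any $\ww\in S_z$ has trivial component along the other eigenvector. That argument works, but your proposal only gestures at it.

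There is also a circularity concern in your appeal to $|W(A)|=(q^2\pm 1)/2$ from Theorem \ref{thm:scalings}: as the paper is organized, that cardinality is itself derived from the density statement you are proving (the two theorems are established jointly in Sections \ref{s:circles}--\ref{s:conics}). The circularity is repairable: the containment $W(A)\subseteq\bigcup_m\mathcal C_m$ of Subsection \ref{ss:ellipses} is independent of any density claim and already gives the upper bound $|W(A)|\le(q^2\pm1)/2$; your system $n_1+n_2=|W(A)|$, $n_1+2n_2=q(q-1)$ then yields $n_1\le q\pm1=|\gfw|$, which together with $n_1\ge|\gfw|$ from route (b) closes the argument. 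But this needs to be said explicitly; as written, the logical order is backwards.
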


Observe that Theorem \ref{thm:densityF} states that points $z$ on the boundary generating curve have minimal density: $|S_z/\mathcal U|=1$.  This sheds light on new ways to intuitively think of finite field numerical ranges using geometry.

In Section \ref{s:prelim} we will provide some background material on finite fields and numerical ranges.  In Section \ref{s:3} we provide several foundational tools for the study of boundary generating curves over finite fields, and use this to prove Proposition \ref{thm:boundary}.  In Section \ref{s:densitygfw} we prove the special case of Theorem \ref{thm:densityF} for points on the boundary generating curve.  Next, we provide a breakdown of matrices into $q^2+2$ equivalence classes in the very short Section \ref{ss:equivalenceclasses}.  In Sections \ref{s:circles} and \ref{s:conics} we complete the proofs of Theorems \ref{thm:scalings} and \ref{thm:densityF}.  Lastly, in Section \ref{s:exceptional} we study the numerical ranges of matrices from equivalence classes that do not satisfy the conditions of our aforementioned results.

\section{Preliminaries}\label{s:prelim}

In this section we will discuss several preliminary results about conics and numerical ranges over finite fields.

\subsection{Background on Conics in Finite Fields}

The following two remarks are generalizations of \cite[Lemma 2.1]{CJKLR}. Remark \ref{lem:nonempty} from \cite{Lidl} shows existence, and Remark \ref{lem:CPoints} shows the precise number of solutions.

\begin{rem}\label{lem:nonempty} Let $c \in \F_q$ be nonzero. Then for all $k \in \F_q$, there exists $a, b \in \F_q$ such that $a^2 + c b^2 = k$.
\end {rem}

 Let our boundary generating curve be (after a linear change of variables) of the form $x^2 + cy^2 = k$. If $c$ is a nonzero square in $\F_q$, then our boundary generating curve is an ellipse. If $c$ is a nonsquare in $\F_q$, then our boundary generating curve is a hyperbola. This bring us to our computation of the number of points on each conic.

\begin {rem}\label{lem:CPoints}
All smooth conics in the projective space of $\F_q$ are projectively equivalent (\cite[Theorem 5.16(i)]{Hirschfeld}, \cite[Theorem 6.30]{Lidl}). Each conic has $q + 1$ points (\cite[Lemma 5.25(iv)]{Hirschfeld}). The line at infinity meets any given conic at 0, 1, or 2 points, where it is an ellipse if it meets the conic at 0 points, parabola at 1 point, and hyperbola at 2 points. In this paper, we will only be considering the ellipse and hyperbola cases.
\end {rem}

\subsection{Background on Finite Field Numerical Ranges}\label{ss:backgroundffnr}

We begin with two results from \cite{CJKLR} that generalize easily to $\mathbb F_{q^2}$.  The proof of the first does not depend on the field in question, so we simply state it here.  The second relies slightly on Remark \ref{lem:nonempty}.

\begin{lem}[Unitary Equivalence, generalization of Lemma 2.6 from \cite{CJKLR}]
Let $A, U \in M_n(\F_{q^2})$ with $U$ unitary. Then $W(A) = W(U^*AU)$.
\end{lem}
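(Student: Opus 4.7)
The plan is a direct substitution argument, mirroring the classical proof for complex matrices. The key observation is that the unitary condition $U^*U = I_n$ in $M_n(\F_{q^2})$ forces $U$ to be invertible with $U^{-1} = U^*$, and hence $UU^* = I_n$ as well; from this, the map $v \mapsto Uv$ preserves the Hermitian form, since
$$\scal{Uv, Uv} = (Uv)^*(Uv) = v^*U^*Uv = v^*v = \scal{v,v}.$$
Consequently, $v \mapsto Uv$ is a bijection of the ``unit sphere'' $S := \{v \in \F_{q^2}^n : \scal{v,v}=1\}$ onto itself, with inverse $v \mapsto U^*v$.

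With this in hand, I would prove both inclusions at once. For any $w \in S$,
$$\scal{(U^*AU)w, w} = w^*U^*AUw = (Uw)^*A(Uw) = \scal{A(Uw), Uw},$$
and $Uw \in S$, so $W(U^*AU) \subseteq W(A)$. Conversely, given $v \in S$, set $w := U^*v \in S$ so that $Uw = v$; the same identity then yields $\scal{Av,v} = \scal{(U^*AU)w, w} \in W(U^*AU)$, giving $W(A) \subseteq W(U^*AU)$.

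There is really no obstacle here: the argument is purely formal manipulation of the Hermitian form, and the only finite-field-specific step is noting that $U^*$ uses the Frobenius morphism in place of complex conjugation, which does not affect any of the algebra. The substantive finite-field content of the paper does not enter until later results, where the Hermitian form fails to be an inner product and the map $v \mapsto \scal{v,v}$ ceases to behave like a norm.
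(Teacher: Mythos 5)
Your proof is correct and is exactly the standard substitution argument that the paper has in mind: the paper does not reproduce a proof, noting only that the argument from \cite{CJKLR} ``does not depend on the field in question,'' and your computation (using $U^*U=I \Rightarrow UU^*=I$ so that $v\mapsto Uv$ bijects the unit sphere, then conjugating the form) is that argument. No issues.
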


\begin{lem}[Schur's Theorem, generalization of Proposition 3.4 from \cite{CJKLR}]\label{lem:schur}
Let $A \in M_2(\mathbb F_{q^2})$.  {Assume further that $A$ has an eigenvalue in $\F_{q^2}$ with some eigenvector $\mf v$ satisfying $\scal{\mf v,\mf v}\neq 0$.} Then there exists a unitary matrix $U$ for which $U^*AU$ is upper triangular.
\end{lem}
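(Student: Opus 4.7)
The plan is to mimic the classical Schur triangularization argument: normalize the guaranteed eigenvector, extend it to a Hermitian-orthonormal basis, and form $U$ as the matrix whose columns are that basis. The only places where working over $\F_{q^2}$ (rather than $\C$) requires care are the two normalization steps, so I will set those up explicitly before assembling $U$.

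First I would fix an eigenvalue $\lambda\in \F_{q^2}$ of $A$ with an eigenvector $\mf v$ satisfying $\scal{\mf v,\mf v}\neq 0$. Since $\overline{\scal{\mf v,\mf v}}=\scal{\mf v,\mf v}$ by the definition of the Hermitian form, we have $\scal{\mf v,\mf v}\in \F_q^*$. To rescale $\mf v$ to a unit vector I need a scalar $c\in \F_{q^2}$ with $c\overline c=1/\scal{\mf v,\mf v}$; such a $c$ exists because the norm map $x\mapsto x\overline x=x^{q+1}$ from $\F_{q^2}^*$ to $\F_q^*$ is surjective (its kernel has order $q+1$, so the image has order $q-1$). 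Replacing $\mf v$ by $c\mf v$, I may assume $\scal{\mf v,\mf v}=1$.

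Next I would produce a Hermitian-orthogonal companion. Pick any $\mf w_0$ linearly independent from $\mf v$ and set
\[
\mf w_1=\mf w_0-\scal{\mf w_0,\mf v}\,\mf v,
\]
so that $\scal{\mf w_1,\mf v}=\scal{\mf w_0,\mf v}-\scal{\mf w_0,\mf v}\scal{\mf v,\mf v}=0$. The key claim is that $\scal{\mf w_1,\mf w_1}\neq 0$: the standard Hermitian form on $\F_{q^2}^2$ is non-degenerate, and $\F_{q^2}^2$ decomposes orthogonally as $\langle \mf v\rangle\oplus \mf v^\perp$. Non-degeneracy of the form on the whole space plus non-isotropy of $\mf v$ forces the form on the one-dimensional $\mf v^\perp$ to be non-degenerate, so its generator $\mf w_1$ cannot be isotropic. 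I can then normalize $\mf w_1$ in exactly the same way as in the previous paragraph to obtain $\mf w$ with $\scal{\mf w,\mf w}=1$ and $\scal{\mf w,\mf v}=0$.

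Finally I would set $U=[\,\mf v\mid \mf w\,]$. The orthonormality relations give $U^*U=I$, so $U$ is unitary. Computing the first column of $U^*AU$, note that $A\mf v=\lambda\mf v$, hence
\[
U^*A\mf v=\lambda U^*\mf v=\lambda\begin{pmatrix}\scal{\mf v,\mf v}\\ \scal{\mf v,\mf w}\end{pmatrix}=\begin{pmatrix}\lambda\\ 0\end{pmatrix},
\]
so $U^*AU$ is upper triangular. The main obstacle is entirely in the previous paragraph: ensuring that Gram--Schmidt does not produce an isotropic orthogonal complement. Once non-degeneracy of the ambient form is invoked, the rest of the argument is formally identical to the classical $\C$-version, and the normalization step simply relies on surjectivity of the $\F_{q^2}/\F_q$ norm.
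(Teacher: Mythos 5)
Your proof is correct and follows essentially the same route as the paper: normalize the non-isotropic eigenvector using surjectivity of the norm map $\F_{q^2}^*\to\F_q^*$ (the paper gets this from Remark \ref{lem:nonempty}), extend to a Hermitian-orthonormal basis, and read off the first column of $U^*AU$. The only cosmetic difference is that the paper writes the orthonormal companion down explicitly as $[-\overline{v_2},\overline{v_1}]^T$, which makes $\scal{\mf w,\mf w}=\scal{\mf v,\mf v}=1$ automatic, whereas you run Gram--Schmidt and justify non-isotropy of the complement via non-degeneracy of the form restricted to $\mf v^\perp$; both are valid.
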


\begin {proof}

Let $A \in M_2(\F_{q^2})$ with an eigenvalue $\lambda \in \F_{q^2}$. Then $(A - \lambda I)\mathbf v = 0$ has at least one family of solutions $\mathbf v$ with $\scal{\mathbf{v}, \mathbf{v}} \neq 0$, and after row reduction, we can find a solution $\mathbf v$ linearly in terms of $A$ and $\lambda$. Hence, $\mathbf v$ has entries in $\F_{q^2}$, say $\mathbf v = [a + b\beta, c + d\beta]^T$.  By Remark \ref{lem:nonempty}, there exist $e,f\in \F_q$ such that $e^2+\alpha f^2=\scal{\vv,\vv}^{-1}$.  Scaling by $e+\beta f$, we assume without loss of generality that $\scal{\mathbf{v}, \mathbf{v}} = 1$. Let $\mathbf w = [-c + d\beta, a - b\beta]^T$. Thus $\mathbf v^*\mathbf w = 0$ and $\scal{\mathbf{w}, \mathbf{w}} = 1$. Let $U$ denote the matrix that has first column $\mathbf v$ and second column $\mathbf w$. Then $U$ is unitary and $AU$ has first column $A\mathbf v = \lambda \mathbf v$. Hence $U^*AU$ has first column $\lambda U^*\mathbf v$, which is $\lambda [1,0]^T$ since $\mathbf v^*\mathbf v = 1$ and $\mathbf v^*\mathbf w = 0$. Hence, we can conclude $U^*AU$ is upper triangular.
\end {proof}

In addition, we will show that the boundary generating curve behaves nicely under unitary equivalence.  Note that the same proof holds over the complex numbers, without revision.

\begin{lem}\label{lem:schur2}
Let $A,U\in M_n(\mathbb F_{q^2})$ with $U$ unitary. Then ${F_{U^*AU}}= {F_A}$ and $\Gamma_{F_{U^*AU}}^\wedge= \Gamma_{F_A}^\wedge$.
\end{lem}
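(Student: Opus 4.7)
The plan is to exploit the fact that, for a unitary matrix $U$, taking $A \mapsto U^*AU$ commutes with both the operations $\operatorname{Re}$ and $\operatorname{Im}$, and hence transforms the \associated polynomial in a way that only multiplies the determinant by $\det(U^*U) = 1$.

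First I would set $B = U^*AU$ and observe that $B^* = U^*A^*U$ since $(U^*)^* = U$. From this, a direct calculation gives
\[
  \operatorname{Re} B = \tfrac{1}{2}(U^*AU + U^*A^*U) = U^*H_1 U, \qquad \operatorname{Im} B = \tfrac{1}{2\beta}(U^*AU - U^*A^*U) = U^*H_2 U,
\]
where $H_1 = \operatorname{Re} A$ and $H_2 = \operatorname{Im} A$. Then, factoring $U^*$ on the left and $U$ on the right of the pencil and using $tI_n = U^*(tI_n)U$, I would write
\[
  xH_1^B + yH_2^B + tI_n \;=\; U^*\bigl(xH_1 + yH_2 + tI_n\bigr)U,
\]
where $H_i^B$ denotes the corresponding Hermitian part of $B$. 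Taking determinants and using multiplicativity together with $\det(U^*)\det(U) = \det(U^*U) = \det(I_n) = 1$, we get $F_B(x:y:t) = F_A(x:y:t)$ as polynomials.

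Once the two \associated polynomials agree, their projective zero sets $\Gamma_{F_B}$ and $\Gamma_{F_A}$ are literally the same subset of $\mathbb{P}^2(\F_{q^2})$, and hence so are their duals. Thus $\Gamma_{F_{U^*AU}}^\wedge = \Gamma_{F_A}^\wedge$, completing the proof.

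There is no real obstacle here, as the argument is a direct verification; the only subtlety is to confirm that the definitions of $\operatorname{Re}$ and $\operatorname{Im}$ do pass through conjugation by a unitary, which they do because $U^*$ plays the role of $U^{-1}$. The same calculation works verbatim over $\mathbb{C}$, which is why the authors note that the proof holds in the classical setting without revision.
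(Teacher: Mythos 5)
Your proof is correct and follows essentially the same route as the paper: conjugation by $U$ passes through the Hermitian/skew-Hermitian decomposition, the pencil factors as $U^*(xH_1+yH_2+tI_n)U$, and multiplicativity of the determinant with $\det(U^*U)=1$ gives $F_{U^*AU}=F_A$, from which equality of the dual curves is immediate.
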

\begin{proof}
We compute
\begin{eqnarray*}
    F_{U^*AU}(x,y,t) & = & \det{\left(x\Big(\frac{(U^*AU)+(U^*AU)^*}{2}\Big)+y\Big(\frac{(U^*AU)-(U^*AU)^*}{2\beta}\Big)+tI\right)} \\
    & =    & \det{\left(U^*\left(x\Big(\frac{A+A^*}{2}\Big)+y\Big(\frac{A-A^*}{2\beta}\Big)+tI\right)U\right)} \\
    & = &  F_{A}(x,y,t).
    \end{eqnarray*}
We conclude that $\Gamma_{F_A}^\wedge = \Gamma_{F_{U^*AU}}^\wedge$.
\end{proof}

\section{Boundary Generating Curves and a Proof of Proposition \ref{thm:boundary}}\label{s:3}

In this section we will prove Proposition \ref{thm:boundary}, after developing the necessary tools.  We begin by building on a result of \cite{CDRSSY} to establish a correspondence between eigenvalues of the Hermitian part of a matrix and the multiplicity of a singularity.

\begin{lem}\label{lem:eigenvalue}
Let $A \in \MM$ with Hermitian part $H_1$.  Let $\epsilon$ be an eigenvalue of $H_1$ with eigenvector $\mathbf v$.  The base polynomial $F_A$ is nonsingular at $(1:0:-\epsilon)$ if and only if $\epsilon$ is an eigenvalue of $H_1$ of algebraic multiplicity 1.  Furthermore, these conditions imply that $\scal{\vv,\vv}\neq 0$.
\end{lem}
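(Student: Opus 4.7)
The plan is to exploit the Jacobi formula for the derivative of $\det$ at the point $(1{:}0{:}{-}\epsilon)$, reducing the question of nonsingularity of $F_A$ there to linear-algebraic conditions on $M := H_1 - \epsilon I$. Writing $F_A(x,y,t) = \det(xH_1 + yH_2 + tI)$, the partial derivatives at $(1,0,-\epsilon)$ are $\tr(\operatorname{adj}(M)\,H_1)$, $\tr(\operatorname{adj}(M)\,H_2)$, and $\tr(\operatorname{adj}(M))$ respectively. For $2\times 2$ matrices $\operatorname{adj}(M) = \tr(M)I - M$, so the $t$-partial simplifies to $\tr(M) = \tr(H_1) - 2\epsilon = \delta - \epsilon$, where $\delta$ is the second root of the characteristic polynomial of $H_1$.

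The forward direction is then immediate: if $\epsilon$ has algebraic multiplicity $1$, then $\delta \neq \epsilon$, so $\partial_t F_A|_{(1,0,-\epsilon)} \neq 0$, and the point is smooth on $\Gamma_F$. For the reverse direction, assuming $\epsilon$ has multiplicity $2$ yields $\tr(M) = 0$ and $\operatorname{adj}(M) = -M$, so $\partial_t F_A = 0$; Newton's identity applied to $(\lambda - \epsilon)^2$ further gives $\tr(H_1^2) = 2\epsilon^2$, whence $\partial_x F_A = \epsilon\tr(H_1) - \tr(H_1^2) = 0$.

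The main obstacle is showing that $\partial_y F_A|_{(1,0,-\epsilon)} = -\tr(M H_2)$ also vanishes in the multiplicity-$2$ case, since this expression a priori depends on $H_2$. The resolution is a structural fact about Hermitian matrices over $\F_{q^2}$: any nonzero nilpotent $2\times 2$ Hermitian matrix has an isotropic vector spanning its kernel. Indeed, such an $M$ has rank $1$ with image equal to kernel, both spanned by $\mathbf v$; writing $M\mathbf u = c(\mathbf u)\mathbf v$, the identity $\scal{M\mathbf u,\mathbf v} = (M\mathbf v)^*\mathbf u = 0$ combined with $c(\mathbf u) \neq 0$ for some $\mathbf u$ forces $\scal{\mathbf v,\mathbf v} = 0$. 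Under the hypotheses in force, this drives $M$ to $0$, so $H_1 = \epsilon I$ and $\partial_y F_A$ vanishes trivially.

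For the \emph{furthermore} statement, in the multiplicity-$1$ case let $\mathbf w$ be an eigenvector of $H_1$ for $\delta$. The Hermitian identity $\scal{H_1\mathbf v,\mathbf w} = \scal{\mathbf v, H_1\mathbf w}$ yields $(\epsilon - \overline{\delta})\scal{\mathbf v,\mathbf w} = 0$, so $\scal{\mathbf v,\mathbf w} = 0$. Since $\{\mathbf v,\mathbf w\}$ spans $\F_{q^2}^2$ and the standard Hermitian form is non-degenerate, an isotropic $\mathbf v$ would force every vector of $\F_{q^2}^2$ to be isotropic, a contradiction; hence $\scal{\mathbf v,\mathbf v} \neq 0$.
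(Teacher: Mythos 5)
Your forward direction is correct, and it uses the same computational toolkit (Jacobi's formula for the partials of $F_A$) that the paper deploys later in the proof of Lemma \ref{lem:vert}: with $M=H_1-\epsilon I$ one gets $\partial_t F_A(1,0,-\epsilon)=\tr(\operatorname{adj}(M))=\tr(H_1)-2\epsilon=\delta-\epsilon$, which is nonzero exactly when $\epsilon$ is simple. The genuine gap is in your reverse direction, at the sentence ``Under the hypotheses in force, this drives $M$ to $0$.'' No hypothesis in force excludes $\scal{\vv,\vv}=0$ there; the ``furthermore'' clause is precisely the assertion that simplicity/nonsingularity rules out isotropy, so in the multiplicity-$2$ branch you cannot assume it. Your structural observation is right --- a nonzero nilpotent $2\times 2$ Hermitian matrix has isotropic kernel --- but over $\F_{q^2}$ the Hermitian form has isotropic vectors, and such matrices genuinely exist. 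Concretely, over $\Z_7[\sqrt{-1}]$ take $M=H_1=\left[\begin{smallmatrix}1&2+3\sqrt{-1}\\2-3\sqrt{-1}&-1\end{smallmatrix}\right]$, which is Hermitian with characteristic polynomial $\lambda^2$ because $|2+3\sqrt{-1}|^2=13=-1$. With $H_2=\left[\begin{smallmatrix}1&0\\0&0\end{smallmatrix}\right]$ one computes $F_A=t^2+yt-xy$, whose gradient at $(1:0:0)$ is $(0,-1,0)\neq 0$: here $\epsilon=0$ is a double eigenvalue of $H_1$, yet $F_A$ is nonsingular at $(1:0:-\epsilon)$. So $\partial_y F_A=-\tr(MH_2)$ really can survive, and the implication you are trying to prove fails for this matrix; the step cannot be patched as written.

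For comparison, the paper argues in the opposite logical order: it shows that $\scal{\vv,\vv}=0$ forces $\epsilon$ to be a double eigenvalue, and then delegates the equivalence between double eigenvalues and singular points to \cite[Lemma 5]{CDRSSY}, asserting that the complex proof transfers to finite fields. That transfer is exactly as delicate as your missing step, since the complex argument unitarily diagonalizes $H_1$, which requires anisotropic eigenvectors --- precisely what fails for the matrix above. A secondary issue with your ``furthermore'' argument: it needs $\epsilon\in\F_q$, so that $\overline{\delta}=\delta\neq\epsilon$; if $\epsilon\notin\F_q$ then $\delta=\overline{\epsilon}$ and the orthogonality relation degenerates. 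Compare Example \ref{ex:nonnice}, where a Hermitian matrix has two simple eigenvalues in $\F_{q^2}\setminus\F_q$ whose eigenvectors are all isotropic. In the paper's applications $\epsilon$ arises as the real part of a point of $W(A)$ and so lies in $\F_q$, which makes this restriction harmless there, but it should be made explicit.
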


\begin {proof}

Let $H_1$ have eigenvalue $\epsilon$ and write $h_{ij}$ for the $ij$-entry of $H_1$. Since $H_1$ is Hermitian and $\epsilon$ is an eigenvalue, $(h_{11} - \epsilon)(h_{22} - \epsilon) = h_{12}h_{21} = |h_{12}|^2$ and $H$ has eigenvector $\textbf{v} = [-h_{12},h_{11} - \epsilon]^T$
.

Suppose that $\scal{\vv,\vv}=0$. Then $|h_{12}|^2 = (h_{11} - \epsilon)^2$, so $(h_{11} - \epsilon)^2 + (h_{11} - \epsilon)(h_{22} - \epsilon) = 0.$ We get 2 cases, namely that $\epsilon=h_{11}$ or $\epsilon = (h_{11} + h_{22})/{2}$.

In the former case, we see that this implies that $h_{12}= 0$, so the off-diagonal elements must be 0.  Thus $H_1$ has standard basis vectors $\mathbf e_1$ and $\mathbf e_2$ as its eigenvectors, and the only way that $\mathbf v$ can be an eigenvector is if $H_1=I$.  Thus $\epsilon=1$ is a double eigenvalue.

In the second case, we note that $\displaystyle\epsilon = \frac{h_{11} + h_{22}}{2} = \frac{Tr(H_1)}{2} = \frac{\epsilon + \lambda}{2}$ where $\lambda$ is a second eigenvalue of $H_1$.  Solving for $\lambda$ yields a double eigenvalue once again:  $\lambda=\epsilon$.

By \cite[Lemma 5]{CDRSSY} (all but the last sentence of which applies equally to the finite field situation), the double eigenvalue arising in either of these cases means $F_A$ has order 2 at $(1 : 0 : -\epsilon)$.

We conclude that $\scal{\vv,\vv}=0$ implies both that $F_A$ is singular at $(1:0:-\epsilon)$ and that $\epsilon$ is a double eigenvalue of $H_1$.  The contrapositive of this conclusion is exactly the last statement that we were trying to prove.

Lastly, it should be clear that \cite[Lemma 5]{CDRSSY} establishes that $\epsilon$ is a simple (multiplicity one) eigenvalue if and only if $F_A$ is nonsingular at $(1:0:-\epsilon)$.
\end {proof}

\begin{lem}\label{lem:vert}
    Let $A\in \MM$.  Let $z=r+\beta s\in \Gamma_{F_A}^\wedge$ such that the tangent line to $\Gamma_{F_A}^\wedge$ at $z$ is of the form $x=r$.  {If $F_A$ nonsingular at $(1:0:-r)$}, then $z\in W(A)$.
\end{lem}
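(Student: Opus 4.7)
The plan is to construct an explicit unit vector $\vv$ with $\scal{A\vv,\vv}=z$, by exploiting projective duality to identify $r$ as an eigenvalue of $H_1$ and then taking $\vv$ to be a normalized corresponding eigenvector. The tangent line $x=r$ to $\gfw$ at $z$ has projective coefficient vector $(1,0,-r)$, and by the biduality of $\gf$ and $\gfw$ its dual point $(1:0:-r)$ must lie on $\gf$; that is, $\det(H_1-rI_2)=F_A(1,0,-r)=0$, so $r$ is an eigenvalue of $H_1$. By the same duality, $z$ is itself the tangent line to $\gf$ at $(1:0:-r)$, so via the Gauss map $z$ has projective coordinates $[\partial_x F_A:\partial_y F_A:\partial_t F_A]$ evaluated at $(1,0,-r)$.

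Next I would invoke Lemma \ref{lem:eigenvalue}: the nonsingularity hypothesis at $(1:0:-r)$ forces $r$ to be a simple eigenvalue of $H_1$ and produces an eigenvector $\vv$ with $\scal{\vv,\vv}\neq 0$. By Remark \ref{lem:nonempty}, I can solve $k^2-\alpha \ell^2=\scal{\vv,\vv}^{-1}$ in $\F_q$ and rescale $\vv$ by $k+\beta \ell$ so that $\scal{\vv,\vv}=1$. Since $A=H_1+\beta H_2$, this gives
$$\scal{A\vv,\vv}=\scal{H_1\vv,\vv}+\beta\scal{H_2\vv,\vv}=r+\beta\scal{H_2\vv,\vv},$$
which already lies on the vertical line $x=r$. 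What remains is to check that its $\beta$-part equals the $\beta$-part $s$ of $z$.

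The main obstacle is this final verification, which is bookkeeping-intensive but essentially mechanical. I would compute $s$ by writing $H_1$ and $H_2$ as general $2\times 2$ Hermitian matrices, expanding the determinant $F_A$, evaluating $\partial_x F_A,\partial_y F_A,\partial_t F_A$ at $(1,0,-r)$, and simplifying using the eigenvalue relations $F_A(1,0,-r)=0$ and $\tr(H_1)=r+\lambda$, where $\lambda\in \F_q$ is the other eigenvalue of $H_1$. The factor $\partial_t F_A|_{(1,0,-r)}=\lambda-r$ is nonzero by the simplicity of $r$, which allows me to dehomogenize and read off an explicit formula for $s$. On the other side, writing the unscaled eigenvector as $\vv=[-h_{12},h_{11}-r]^T$ and expanding $\vv^* H_2 \vv$ directly will produce an expression with a common factor of $(a-r)$; after dividing by $\scal{\vv,\vv}=(a-r)(\lambda-r)$ and using the relation $(a-r)(d-r)=b^2-\alpha c^2$ that comes from $F_A(1,0,-r)=0$, the two formulas should agree, yielding $\scal{A\vv,\vv}=z$ and hence $z\in W(A)$.
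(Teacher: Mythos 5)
Your proposal is correct and follows essentially the same route as the paper: identify $(1:0:-r)$ as a point of $\gf$ so that $r$ is an eigenvalue of $H_1$, use Lemma \ref{lem:eigenvalue} to get a simple eigenvalue with a normalizable eigenvector $\vv$, observe $\scal{A\vv,\vv}=r+\beta\scal{H_2\vv,\vv}$ lies on $x=r$, and match the $\beta$-part with $s$ via the gradient of $F_A$ at $(1:0:-r)$ together with nonsingularity. The only difference is cosmetic: the paper first applies Lemma \ref{lem:schur} to diagonalize $H_1$, which reduces your ``bookkeeping-intensive'' coordinate computation to the one-line identities $F_x=r(\lambda-r)$, $F_y=(\lambda-r)s'$, $F_t=\lambda-r$.
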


\begin{proof}
 Let $A\in \M$ and let $z=r+\beta s\in \Gamma_{F_A}^\wedge$ such that the tangent line to $\Gamma_{F_A}^\wedge$ at $z$ is of the form $x=r$.

Since $x=r$ is a tangent line to $\gfw$, $(1:0:-r)\in \gf$.  This means that $\textmd{det}(H_1 -rI) = 0$ and therefore $r$ is an eigenvalue of $H_1$. By Lemma \ref{lem:eigenvalue}, since $F_A$ is nonsingular at $(1:0:-r)$, $H_1$ has two distinct eigenvalues and (without loss of generality) $\scal{\vv,\vv}=1$.
Therefore, $\scal{A\vv,\vv}$ is a point in $W(A)$ on the line $x=r$.  So $\scal{A\vv,\vv}=r +\beta s'$ for some $s'\in \F_q$.

It remains to show that $s' = s$, which we will prove by showing that this point $r+\beta s'$ is in $\gfw$.  That is, we will show that the line $r x + s' y + t=0$ is tangent to $\Gamma_F$ at the point $(1:0:-r)$.  In particular, we will show that the partial derivatives of $F$ with respect to $x$, $y$, and $t$ have a ratio of $(r:s':1)$ at the point $(1:0:-r)$ on $\Gamma_F$.  We begin with the partial derivative with respect to the first variable, $x$, using Jacobi's formula in the first step.

   $$F_x(x,y,t)= \tr\left(\textmd{adj}(xH_1+yH_2+t I) \frac{\partial }{\partial x}(xH_1+yH_2+t I)\right).$$

     At the point $(1:0:-r)$, this simplifies to $\tr\big(\textmd{adj}(H_1-r I) H_1\big).$

   Recall that $\scal{\vv,\vv}\neq 0$.  Using Lemma \ref{lem:schur} we assume without loss of generality that $H_1$ is diagonal and that $r$ is the first entry in $H_1$.  Recalling that $H_1$ has distinct eigenvalues, we let $\lambda$ denote the second eigenvalue so that $H_1=\textmd{diag}(r,\lambda)$.  Thus the above trace formula simplifies to $r(\lambda-r)$.

    A similar calculation yields that $F_t(1,0,-r)=\lambda-r$.  To compute $F_y(1,0,-r)$, we will note that $r+\beta s'=\scal{A\mathbf e_1,\mathbf e_1}=\mf e_1^* H_1 \mf e_1  +\beta \mf e_1^* H_2 \mf e_1$.  Since $\mf e_1$ is an eigenvector of $H_1$ corresponding to the eigenvalue $r$, we conclude that $s'=\mf e_1^* H_2 \mf e_1$ is the first coordinate of the matrix $H_2$.  Our situation is thus similar to that described above, with $F_y(1,0,-r)=\tr\big(\textmd{adj}(H_1-r I) H_2\big) = (\lambda-r)s'.$

    We conclude that $(F_x:F_y:F_t)(1:0:-r)=(r:s':1)$, so that $r+\beta s'$ is indeed on $\gfw$.

Therefore, both $r+\beta s$ and $r+\beta s'$ are on $\gfw$, both points on the line $x=r$ tangent to $\gfw$.  Then both $rx+sy + t = 0$ and $rx + s'y+t= 0$ are tangent lines to $\gf$ at the point $(1:0:-r)$.  Recall that $F_A$ is nonsingular at $(1:0:-r)$.  Thus these two tangent lines are the same and $s=s'$.
\end{proof}

Next, we recall a linearity result for the numerical range.

\begin{lem}[Lemma 2.7 from \cite{CJKLR}, Remarks 6-7 from \cite{Ballico1}]\label{lem:linearity}
Let $A\in M_n(\F_{q^2})$ and let $\rho,\tau\in \F_{q^2}$.  Then $W(\rho A+\tau I)=\rho W(A)+\tau$.
\end{lem}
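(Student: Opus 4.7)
The plan is to unfold both sides using the definition of the numerical range and to exploit the $\F_{q^2}$-linearity of the Hermitian form $\scal{\cdot,\cdot}$ in its first slot, which is immediate from $\scal{\mathbf u,\mathbf v}=\mathbf v^*\mathbf u$. The key observation is that replacing $A$ by $\rho A+\tau I$ changes the function being evaluated but not the constraint set $\{\mathbf v\in \F_{q^2}^n : \scal{\mathbf v,\mathbf v}=1\}$, so the two numerical ranges are parametrized by the same family of vectors.

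With that in hand, for any $\mathbf v$ with $\scal{\mathbf v,\mathbf v}=1$ I would compute
\[
\scal{(\rho A+\tau I)\mathbf v,\mathbf v}=\mathbf v^*(\rho A+\tau I)\mathbf v=\rho\,\mathbf v^*A\mathbf v+\tau\,\mathbf v^*\mathbf v=\rho\scal{A\mathbf v,\mathbf v}+\tau,
\]
using $\scal{\mathbf v,\mathbf v}=1$ in the last step. Taking the set of such values as $\mathbf v$ ranges over unit-norm vectors then yields $W(\rho A+\tau I)=\{\rho\scal{A\mathbf v,\mathbf v}+\tau : \scal{\mathbf v,\mathbf v}=1\}=\rho W(A)+\tau$, by the definition of the affine image $\rho W(A)+\tau$.

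There is essentially no obstacle here; the proof is a one-line manipulation once the definitions are expanded. In particular, $\rho$ need not be nonzero (if $\rho=0$ the statement reduces to $W(\tau I)=\{\tau\}$, which the same computation gives), and the argument transfers verbatim between $\C$ and $\F_{q^2}$ because only the linearity of $\mathbf u\mapsto \scal{\mathbf u,\mathbf v}$ and the fact that $I$ acts as the identity are used. The finite-field subtleties concerning isotropic vectors never enter, since we restrict throughout to vectors already satisfying $\scal{\mathbf v,\mathbf v}=1$.
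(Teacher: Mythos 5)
Your computation is correct and is exactly the standard argument behind this lemma, which the paper simply cites from \cite{CJKLR} and \cite{Ballico1} without reproving: linearity of $\mathbf v\mapsto \mathbf v^*M\mathbf v$ in $M$ together with the normalization $\mathbf v^*\mathbf v=1$, applied over the same constraint set on both sides. Nothing is missing, and your remark that the finite-field subtleties about isotropic vectors never enter is accurate since the unit-sphere condition is imposed identically for both matrices.
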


We will now prove in the next two lemmas that the boundary generating curve satisfies the same linearity condition as the numerical range.  Note that our proof technique also applies in the classical situation of complex number numerical ranges.  For completeness, we state those linearity conditions here.

\begin{lem}\label{lem:hAalt}
    Let $\rho=a+\beta b\in \Fext$ be nonzero.  Then $\Gamma_{F_{\rho A}}^\wedge = \rho\Gamma_{F_{A}}^\wedge$.  Additionally, $F_A$ is nonsingular if and only if $F_{\rho A}$ is nonsingular.
\end{lem}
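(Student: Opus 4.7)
The plan is to show that $\rho A$ has Hermitian and imaginary parts that are explicit $\F_q$-linear combinations of those of $A$, so that $F_{\rho A}$ is obtained from $F_A$ by an invertible linear change in the first two projective coordinates. Both conclusions of the lemma will then follow from this change of variables: nonsingularity is preserved because the transformation is a linear isomorphism, and the dual-curve identity $\Gamma_{F_{\rho A}}^\wedge = \rho\, \Gamma_{F_A}^\wedge$ reduces to one short algebraic identity using only $\beta^2 = \alpha$.

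First I would write $A = H_1 + \beta H_2$ and expand
\begin{equation*}
\rho A = (a + \beta b)(H_1 + \beta H_2) = (aH_1 + \alpha b H_2) + \beta(bH_1 + aH_2).
\end{equation*}
Since $a,b \in \F_q$ and $H_1, H_2$ are Hermitian, both matrices on the right are Hermitian, so by uniqueness of the decomposition $A = \Real A + \beta\, \operatorname{Im} A$ they are exactly the Hermitian and imaginary parts of $\rho A$. Substituting into Definition \ref{defn:assoccurve} gives
\begin{equation*}
F_{\rho A}(x,y,t) = F_A(ax+by,\; \alpha b x + ay,\; t),
\end{equation*}
i.e.\ $F_{\rho A} = F_A \circ M'$, where $M'$ acts on $(x:y:t)$ by $\diag(M, 1)$ with $M = \bigl(\begin{smallmatrix} a & b \\ \alpha b & a \end{smallmatrix}\bigr)$. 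Since $\det M = a^2 - \alpha b^2 = \rho\overline\rho$ is nonzero (as $\rho \neq 0$), the map $M'$ is a projective linear isomorphism of the projective plane, bijecting $\Gamma_{F_{\rho A}}$ with $\Gamma_{F_A}$.

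For the nonsingularity half, the chain rule gives $\nabla F_{\rho A}(P) = (M')^T(\nabla F_A)(M'P)$; since $(M')^T$ is invertible, $\nabla F_{\rho A}$ vanishes at $P \in \Gamma_{F_{\rho A}}$ iff $\nabla F_A$ vanishes at $M'P \in \Gamma_{F_A}$, and the equivalence follows. For the dual-curve half, I would take a smooth $(x_0:y_0:t_0) \in \Gamma_{F_{\rho A}}$, set $(p,q,r) = \nabla F_A$ at $M'(x_0,y_0,t_0)$, and read off from the chain-rule computation above that the tangent to $\Gamma_{F_{\rho A}}$ at $(x_0:y_0:t_0)$ is the line with coefficients $(ap+\alpha bq,\; bp+aq,\; r)$. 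Under the paper's identification of a tangent line $PX+QY+RT=0$ with $R \neq 0$ with the affine dual point $(P+\beta Q)/R \in \Fext$, the two dual points are $(p+\beta q)/r$ and $\bigl((ap+\alpha bq) + \beta(bp+aq)\bigr)/r$ respectively, so the heart of the argument is the one-line identity
\begin{equation*}
(ap + \alpha b q) + \beta(bp + aq) = (a + \beta b)(p + \beta q) = \rho(p + \beta q),
\end{equation*}
a direct expansion via $\beta^2 = \alpha$. Dividing by $r$ shows that the dual point on the $\rho A$ side is $\rho$ times the dual point on the $A$ side, and multiplication by $\rho$ is a bijection on $\Fext$, yielding $\Gamma_{F_{\rho A}}^\wedge = \rho\, \Gamma_{F_A}^\wedge$ on affine points; tangents with $R=0$ pair up projectively under the same map and contribute no affine point either way. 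I do not expect a real obstacle beyond clean coordinate bookkeeping through the chain rule; all of the content is packaged in the one-line identity above.
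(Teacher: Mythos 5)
Your proposal is correct and follows essentially the same route as the paper: decompose $\rho A = (aH_1+\alpha bH_2)+\beta(bH_1+aH_2)$, deduce $F_{\rho A}=F_A\circ B_\rho^T$ for the invertible linear change of coordinates (your $M'$ is the paper's $B_\rho^T$), and apply the chain rule so that the gradient transforms by $B_\rho$, which is exactly multiplication by $\rho$ on $\Fext$ under the identification $x+\beta y\leftrightarrow [x,y,1]^T$. Your explicit one-line identity $(ap+\alpha bq)+\beta(bp+aq)=\rho(p+\beta q)$ is precisely the content of the paper's observation that $B_\rho\ve(z)=\ve(\rho z)$.
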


\begin{proof}
    We will begin by establishing some notation.   We replace each $z=x+\beta y \in \Fext$ with $\ve(z)=[x,y,1]^T$ so that we can exclusively use vector notation.     Write $B_{\rho}=
   \left[ \begin{matrix}
a & b\alpha & 0\\
 b & a & 0\\
0 & 0 & 1\\
\end{matrix}\right]
    $, so that $B_{\rho}\ve(z)=\ve(\rho z)$.
    Lastly, we write $F_{A,j}$ to denote the partial derivative of $F_A$ with respect to the $j$th variable.
    
     We will analyze points in $\Gamma_{F_{A}}^\wedge$ and $\Gamma_{F_{\rho A}}^\wedge$ by considering partial derivatives of $F_A$ and $F_{\rho A}$.  We compute that 
        \begin{eqnarray*}
            F_{\rho A}(\ve(z)) & = & \textmd{det}(x(aH_{1}+\alpha bH_{2})+y(aH_{2}+bH_{1})+tI) \\
            & = & \textmd{det}((ax+by)H_{1}+(\alpha bx + ay)H_{2}+tI)\\
            & = & F_{A}(B_{\rho}^T\ve(z)).
        \end{eqnarray*}
     We pause here to note that $F_{\rho A}$ is clearly nonsingular if and only if $F_A$ is nonsingular. 
     
     Next, to compute partial derivatives of $F_{\rho A}(\ve(z))$, we apply the multivariate chain rule to $F_{A}(B_{\rho}^T\ve(z))$.  This results in 
        \begin{eqnarray*}
            \nabla F_{\rho A}(\ve(z)) & = & B_{\rho}(\nabla F_A)(B_{\rho}\ve(z))   \\
             & = & B_{\rho}[F_{A,1}(B_{\rho}^T\ve(z)),F_{A,2}(B_{\rho}^T\ve(z)),F_{A,3}(B_{\rho}^T\ve(z))]^T.
        \end{eqnarray*}
     Therefore we have 
     $$\Gamma_{F_{\rho A}}^\wedge=B_{\rho}\{[F_{A,1}(B_{\rho}^T\ve(z)),F_{A,2}(B_{\rho}^T\ve(z)),F_{A,3}(B_{\rho}^T\ve(z))]^T: F_A(B_{\rho}^T\ve(z))=0\}.$$  Since $B_{\rho}^T$ is invertible, this is exactly $\Gamma_{F_{\rho A}}^\wedge= B_{\rho} \Gamma_{F_A}^\wedge$.  In terms of multiplication in $\Fext$, this is $\Gamma_{F_{\rho A}}^\wedge=\rho\Gamma_{F_A}^\wedge$.
\end{proof}

\begin{lem}\label{lem:translate}Let $\tau=s_1+\beta s_2 \in \F_{q^2}$.  Then $(a:b:c)\in \Gamma_{F_{A+\tau I}}^\wedge$ if and only if $(a-s_1:b-s_2:c)\in \Gamma_{F_{A}}^\wedge$.
\end{lem}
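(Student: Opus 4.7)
The plan is to mimic the chain-rule strategy used in the proof of Lemma \ref{lem:hAalt}. First, I would observe that translating $A$ by $\tau I = (s_1 + \beta s_2) I$ simply shifts the two Hermitian parts: from the definitions of $\Real$ and $\im$, we immediately get $H_1(A + \tau I) = H_1(A) + s_1 I$ and $H_2(A + \tau I) = H_2(A) + s_2 I$. Substituting into the definition of the associated polynomial from Definition \ref{defn:assoccurve} gives the key identity
\[ F_{A + \tau I}(x, y, t) = F_A(x,\; y,\; s_1 x + s_2 y + t). \]

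Next I would encode this as $F_{A + \tau I}(\mathbf{v}) = F_A(M^T \mathbf{v})$, where $\mathbf{v} = (x, y, t)^T$ and
\[ M = \begin{pmatrix} 1 & 0 & s_1 \\ 0 & 1 & s_2 \\ 0 & 0 & 1 \end{pmatrix}. \]
Applying the multivariable chain rule then yields $\nabla F_{A + \tau I}(\mathbf{v}) = M\, \nabla F_A(M^T \mathbf{v})$. Since the boundary generating curve is precisely the projective image of $\Gamma_F$ under the gradient map $\nabla F$, this implies
\[ \Gamma_{F_{A + \tau I}}^\wedge \;=\; M \cdot \Gamma_{F_A}^\wedge, \]
with invertibility of $M$ providing the reverse inclusion. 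This is the key content of the lemma, packaged in linear-algebraic form.

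Finally, the action of $M$ on $(a, b, c)^T$ is $(a + s_1 c,\; b + s_2 c,\; c)^T$. Taking the affine representative with $c = 1$, this is $(a + s_1, b + s_2, 1)^T$, which corresponds to translation by $\tau$ in the affine embedding $\gfw \hookrightarrow \F_q^2 \cong \F_{q^2}$ discussed after Definition \ref{defn:assoccurve}. Reading the statement of the lemma in this affine embedding then produces the claimed equivalence. I expect the only real subtlety to be the projective-versus-affine bookkeeping, since the statement as written uses $(a - s_1 : b - s_2 : c)$ rather than the strictly projective form $(a - s_1 c : b - s_2 c : c)$; this is consistent with the paper's convention of viewing $\gfw$ as a subset of $\F_q^2$.
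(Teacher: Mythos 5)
Your proposal is correct and follows essentially the same route as the paper: both rest on the identity $F_{A+\tau I}(x,y,t)=F_A(x,y,s_1x+s_2y+t)$ and a chain-rule computation showing the gradient (hence the dual curve) is translated accordingly. The only difference is organizational — you handle the $s_1$ and $s_2$ components simultaneously via the matrix $M$ (mirroring the proof of Lemma \ref{lem:hAalt}), whereas the paper computes the partials via Jacobi's formula and treats $\tau\in\F_q$ and $\tau\in\beta\F_q$ in two separate steps; your closing remark on the affine-versus-projective normalization of $(a-s_1:b-s_2:c)$ matches the convention the paper itself uses.
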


\begin{proof}We apply the same differentiation techniques as in Lemma \ref{lem:vert}.  First, assume $\tau = s_1\in \F_q$.  Then
$$(F_{A+s_1I})_x(x,y,t)=\tr\big(\textmd{adj}(x(H_1+s_1I)+yH_2+tI) \frac{\partial }{\partial x}(x(H_1+s_1 I)+yH_2+tI)\big).$$  This simplifies to $$\tr\big(\textmd{adj}(xH_1+yH_2+(\tau x+t) I)H_1\big)+\tr\big(\textmd{adj}(xH_1+yH_2+(\tau x+t)I)\big)\tau.$$  This is of course equal to $(F_A)_x(x,y,\tau x+t)+{\tau}(F_A)_t(x,y,\tau x+t)$.  Similarly, $(F_{A+\tau I})_y(x,y,t)=(F_A)_y(x,y,\tau x+t)$ and
$(F_{A+\tau I})_t(x,y,t)=(F_A)_t(x,y,\tau x+t)$.

Let $(a:b:c)\in \Gamma_{F_{A+\tau I}}^\wedge$.  Then for some $x,y,t$ we have $$(a:b:c)=\left((F_{A})_x(x,y,\tau x+t)+\tau(F_{A})_t(x,y,\tau x+t):(F_{A})_y(x,y,\tau x+t):(F_{A})_t(x,y,\tau x+t)\right).$$  Choosing $X=x, Y=y$, and $T=\tau x+t$ allows us to simplify this expression without affecting the derivatives.  Thus $(a:b:c)=(e+\tau:f:g)$ for some $(e:f:g)\in \Gamma_{F_{A}}^\wedge$.  This concludes the primary case, when $\tau\in \F_q$.

If $\tau\in \beta \F_q$, the proof proceeds similarly with only the second coordinate changing.  If $\tau=s_1+s_2\beta$ with $s_1,s_2\in \F_q$, we follow the same process in two steps, first translating by $s_1$ and then by $s_2$.
\end{proof}

We are now ready to prove Proposition \ref{thm:boundary}.

\begin{proof}[Proof of Proposition \ref{thm:boundary}]
In Lemma \ref{lem:vert} we proved that if $z\in \gfw$ with vertical tangent line, then $z\in W(A)$.  We are now ready to discuss other points $z$ on $\gfw$.  Let $L:ax+by=ct$ denote the tangent line (in projective coordinates) to $\gfw$ at $z$.

Without loss of generality, set $t = 1$ in our Equation of line $L$, and parameterize it as follows: 
\begin{equation}
    L = 
    \begin{cases}
    x(\gamma) = d\gamma+e, \\
    y(\gamma) = f\gamma+g
    \end{cases}
\end{equation}
    \noindent where $d, e, f, g, \gamma \in \F_q$. Let $\rho = \alpha f - d\beta$; as we are working in a finite field, we know that $-1, \alpha f, -d \in \F_q$. To show $\rho L$ is a line with a constant $x$ coordinate, we multiply $L$ by $\rho$: $$(\alpha f-d\beta)(x(\gamma)+y(\gamma)\beta) = (\alpha f-d\beta)((d\gamma+e)+(f\gamma+g)\beta),$$ which simplifies to $\alpha(ef - dg) + ((f^2\alpha - d^2)\gamma + fg\alpha - de)\beta$. As $d, e, f, g, \alpha \in \F_q$, we clearly see $\alpha(ef - dg)$ is a constant in $\F_q$; so $\rho L$ varies solely in terms of the y-coordinate; that is, $\rho L$ is a vertical line. By Lemma \ref{lem:hAalt}, we have that $\rho z \in \Gamma_{F_{\rho A}}^\wedge$.
    
    Lastly, since we assumed that $F_A$ is nonsingular, the second conclusion of Lemma \ref{lem:hAalt} tells us that $F_{\rho A}$ is also nonsingular.  Lemma \ref{lem:vert} then yields $\rho z \in W(\rho A)$, so by Lemma \ref{lem:linearity} we see that $\rho z \in \rho W(A)$.  Thus $z \in W(A)$. This concludes the proof. 
\end{proof}

We conclude this section with an example to illustrate how the classical complex number numerical range process of using vertical tangent lines to identify points on the boundary generating curve can fail over finite fields.  In the complex number situation, we can always rotate to make a tangent line vertical.  However, over finite fields, this process sometimes fails:  for example the point $1+2\sqrt{-1}$ in $\Z_7\left[\sqrt{-1}\right]$ cannot be rotated to land in the base field $\Z_7$, as the circle of radius-squared 5 centered at the origin does not intersect the $x$-axis.  Furthermore, there are many rotation-scalings over finite fields that result in matrices whose Hermitian parts with eigenvectors $\vv$ all satisfying $\scal{\vv,\vv}=0$.  We will explore the latter situation in the following example.

\begin{ex}\label{ex:nonnice}

Consider the matrix $A = \begin{bmatrix}
1 & 2 \\
0 & 0 \\
\end{bmatrix}$ over $\mathbb{Z}_7[\sqrt{-1}]$, so that $H_1 = \begin{bmatrix}
1 & 1 \\
1 & 0
\end{bmatrix}$ and $H_2 = \begin{bmatrix}
0 & 6\sqrt{-1} \\
\sqrt{-1} & 0 \\
\end{bmatrix}$. For any rotation-scaling $\tau=k+\ell\sqrt{-1}$ of $A$, we have that $\textmd{Re}\left((k+\ell\sqrt{-1})A\right) = kH_1- \ell H_2$.  Up to scalings, all rotation-scalings will result in a Hermitian part equivalent to either $H_1$ or $h_1-\ell H_2$ for $\ell=0,1,\ldots,6$.  $H_1$ has eigenvalues $4\pm 2\sqrt{-1}$ and eigenvectors $\mf v_\pm =\begin{bmatrix}
4\pm 2\sqrt{-1} \\
1
\end{bmatrix}$; observe that $\scal{\mf v_\pm,\mf v_\pm}=0$.  For $H_2$, we have eigenvalues $\pm 1$ and eigenvectors $\mf w_\pm =\begin{bmatrix}
\pm 1\sqrt{-1} \\
1
\end{bmatrix}$, for which $\scal{\mf w_\pm,\mf w_\pm}=2\neq 0$.  Furthermore, the two other rotation-scalings which do satisfy our condition of $\scal{\mf w_\pm,\mf w_\pm}\neq 0$ for any eigenvector $\mf v$ are $H_1 - H_2$ and $H_1 - 6H_2$. Any other rotation-scaling will result in eigenvectors $\mf v$ with $\scal{\mf w_\pm,\mf w_\pm}=0$.
\end{ex}

In the above example, the boundary generating curve is a hyperbola and the numerical range is a collection of scalings of that hyperbola.  We will conclude the section with a more illustrative example of this geometry, where the boundary generating curve is an ellipse and the rest of the numerical range is made up of scalings of this ellipse.

\begin{ex}
Consider the matrix $A= \begin{bmatrix}
1 & 1 \\
0 & 0 \\
\end{bmatrix}$ over $\mathbb Z_7[\sqrt{-1}]$.  Its numerical range is given in Figure \ref{fig:ell}, where the thick ellipse (in blue in the online version of this paper) is the boundary generating curve.  The points marked with diamonds (in red in the online version) are the eigenvalues of the matrix and the foci of this ellipse.  The other ellipses are scalings of this boundary generating curve.  Note that each ellipse has 8 points (by Remark \ref{lem:CPoints}), except the trivial scaling (by zero) of the ellipse to a point at its center, which is the average of the two foci.  Observe that it has points with vertical and horizontal tangent lines, which corresponds to $\textmd{Re }A$ and $\textmd{Im }A$ having eigenvectors $\mf v$ with $\scal{\mf v,\mf v}\neq 0$.
\begin{figure}[h]
\includegraphics[width=8cm]{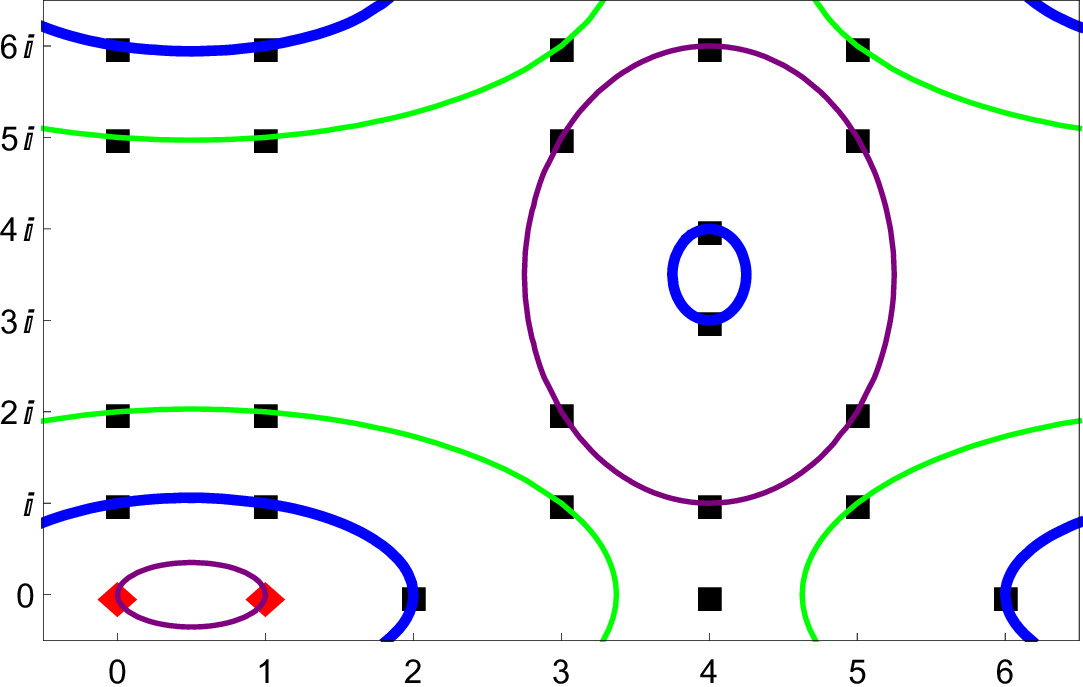}
\caption{Numerical range of $A$, with scalings of the boundary generating curve.}\label{fig:ell}
\end{figure}
\end{ex}

\section{Density on the Boundary Generating Curve}\label{s:densitygfw}

In this section we will prove the first statement in Theorem \ref{thm:densityF} about the density of points on the boundary generating curve $\gfw$.

    Let $A\in M_2(\F_{q^2})$, assume that {$F_A$ is nonsingular}.  Then by Proposition \ref{thm:boundary}, $\gfw\subseteq W(A)$. Let $z\in \gfw$ and let $S_z=\{\mathbf v\mid \scal{A \mathbf v, \mathbf v}=z, \scal{\vv,\vv}=1\}$, i.e. the pre-image of the numerical range map at $z$. 
    
    As we saw in the proof of Proposition \ref{thm:boundary}, there exists some nonzero $\rho\in \F_{q^2}$ such that $\rho z\in \Gamma_{F_{{\rho} A}}^\wedge$ is on a vertical tangent line and $F_{\rho A}$ is nonsingular.
    
    Then $\textmd{Re }\rho z$ is an eigenvalue of $\textmd{Re }\rho A$; let $\uu$ denote its eigenvector.  By Lemma \ref{lem:eigenvalue}, since $F_{\rho A}$ is nonsingular we have that $\textmd{Re }\rho z$ is a simple eigenvalue (multiplicity one) and without loss of generality $\uu$ is a unit vector.  Then $\uu^*H_1(\rho A)\uu=\Real \rho z$, so that $\Real \uu^*\rho A\uu = \Real \rho z$.  Let $L$ denote the line tangent to $\Gamma_{F_{\rho A}}^\wedge$ at $\rho z$, i.e. $L: y=\Real \rho z$.  {Then $\widehat L$ is a point on $\Gamma_{F_{\rho A}}$, with tangent line $\widehat{\rho z}$.  Since $F_{\rho A}$ is a degree 2 nonsingular curve, $L$ and $\Gamma_{F_{\rho A}}$ cannot have any other intersection points.}  Thus $\uu^*\rho A\uu = \rho z$, so $\uu\in S_{\rho z}(\rho A)$.  Observe that $\uu^* \rho A \uu = \rho z\in \Gamma_{F_{\rho A}}^\wedge$ if and only if $\uu^*Au=z\in \Gamma_{F_A}^\wedge$.  That is, we have that $\uu\in S_z(A)$.  

    By Lemma \ref{lem:schur} we assume without loss of generality that $\Real \rho A$ is a diagonal matrix, with diagonal entries $\Real \rho z$, with eigenvector $\mathbf e_1=\uu$, and some other eigenvalue $\lambda$, with unit eigenvector $\mathbf e_2$.

    Let $\ww\in S_z$.  Then $\ww=a\mathbf e_1+b\mathbf e_2$ with $|a|^2 + |b|^2 = 1$.  Then $\Real rz=\ww^*\textmd{Re}(\rho A)\ww=|a|^2\textmd{Re}(\rho z) +|b|^2 \lambda $.  Hence $\textmd{Re}(\rho z)(1-|a|^2)=\lambda |b|^2$.  Thus either $b=0$, or $\Real \rho z=\lambda$.  Since $\Real \rho A$ has distinct eigenvalues, we conclude that $b=0$.  Therefore $\ww=a\uu$ for some $a$ in the set of unitary scalars $\mathcal U$.  This completes the proof that $|S_z/\mathcal U|=1$. It then follows that $|S_z| = q + 1$.

Note that the above proof did not rely on the fact that we are working over a field of characteristic $p$, and in fact the entire proof holds for the situation of numerical ranges over the complex numbers.  We therefore have proven the following as well.

\begin {cor}\label{cor:densitygfw}

Let $A\in M_2(\C)$ and assume $F_A$ is nonsingular.  Then for each $z\in \gfw$, we have $|S_z/\mathcal U| = 1$.

\end {cor}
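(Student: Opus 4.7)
The plan is to observe that the preceding proof, which established $|S_z/\mathcal U|=1$ over $\F_{q^2}$, uses nothing specific to positive characteristic; it transfers to $M_2(\C)$ once each auxiliary tool is replaced by its classical counterpart. Consequently the body of the proof reduces to verifying that all supporting pieces have complex analogs, and then restating the same argument.

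First, I would catalog the classical analogs that are needed: the rotation-scaling identity $\Gamma_{F_{\rho A}}^\wedge=\rho\Gamma_{F_A}^\wedge$ for nonzero $\rho\in\C$ (Lemma \ref{lem:hAalt}, whose chain-rule derivation transfers with $B_\rho$ replaced by the standard real $2\times 2$ rotation-scaling matrix); Schur's theorem for complex $2\times 2$ matrices (textbook); the inclusion $\gfw\subseteq W(A)$ (classical Kippenhahn, or Proposition \ref{thm:boundary}, whose proof is characteristic-free); the vertical-tangent lemma \ref{lem:vert}, whose Jacobi-formula computation goes through unchanged; and the correspondence in Lemma \ref{lem:eigenvalue} between simple eigenvalues of $H_1$ and nonsingular points of $\Gamma_{F_A}$ lying on the line $y=0$. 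In the complex setting this last correspondence simplifies, because $\scal{\vv,\vv}>0$ for every nonzero $\vv\in\C^n$, so the degenerate branches that complicated the finite-field version simply vanish.

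Then I would run the original argument verbatim. Given $z\in\gfw$, choose a unit $\rho\in\C$ rotating the tangent line at $z$ to a vertical one; such a $\rho$ always exists because the unit circle acts transitively on tangent directions in $\C$. By the complex Lemma \ref{lem:hAalt}, $\rho z\in\Gamma_{F_{\rho A}}^\wedge$ lies on a vertical tangent, so by the complex Lemmas \ref{lem:vert} and \ref{lem:eigenvalue}, $\Real \rho z$ is a simple eigenvalue of $\Real \rho A$ with unit eigenvector $\uu$, and $\uu\in S_{\rho z}(\rho A)=S_z(A)$. Applying Schur, conjugate so that $\Real \rho A=\textmd{diag}(\Real \rho z,\lambda)$ with $\uu=\mathbf e_1$ and $\lambda\neq\Real \rho z$. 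For any $\ww=a\mathbf e_1+b\mathbf e_2\in S_z$ with $|a|^2+|b|^2=1$, expanding $\ww^*(\Real \rho A)\ww=\Real \rho z$ yields $(\Real \rho z)(1-|a|^2)=\lambda|b|^2$, which forces $b=0$ by distinctness of eigenvalues; hence $\ww=a\uu$ with $|a|=1$, so $a\in\mathcal U$ and $|S_z/\mathcal U|=1$.

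The main obstacle is essentially bookkeeping: I need to check that the chain-rule computation in Lemma \ref{lem:hAalt} does not secretly exploit finiteness of the field, and that each ``without loss of generality'' step has an unambiguous complex counterpart. No new ideas are required; the two finite-field subtleties, namely vectors with $\scal{\vv,\vv}=0$ and rotations unable to bring a given direction to vertical, simply do not arise over $\C$, making the complex case strictly easier than the one just proved.
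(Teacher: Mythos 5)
Your proposal is correct and follows exactly the paper's route: the paper proves this corollary precisely by observing that the Section \ref{s:densitygfw} argument (rotate to a vertical tangent via Lemma \ref{lem:hAalt}, identify $\Real\rho z$ as a simple eigenvalue via Lemmas \ref{lem:vert} and \ref{lem:eigenvalue}, diagonalize by Schur, and force $b=0$ in the expansion $\ww=a\mathbf e_1+b\mathbf e_2$) uses nothing specific to characteristic $p$. Your added observations that the isotropic-vector and non-rotatable-tangent pathologies disappear over $\C$ are consistent with the paper's remark and require no further justification.
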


\section{Equivalence Classes of 2-Dimensional Matrices}\label{ss:equivalenceclasses}

In this short section, we return to some preliminaries from Section \ref{ss:backgroundffnr}, and combine it with the results of Section \ref{s:3}.  Recall that in our version of Schur's Theorem (Lemma \ref{lem:schur}), numerical ranges of matrices satisfying the conditions of Theorems  \ref{thm:scalings} and \ref{thm:densityF} are invariant under unitary transformation. By Lemma \ref{lem:schur2}, the boundary generating curve preserves this invariance.  

Similarly, from Lemma \ref{lem:linearity} we know that $W(\rho A + \tau I) = \rho W(A) + \tau$ for $\rho,\tau \in \F_{q^2}$. Therefore, after unitary transformation, multiplication, and translation, every matrix satisfying the conditions of Theorems \ref{thm:scalings} and \ref{thm:densityF} is equivalent to one of the following matrices, for nonzero $\zeta \in \mathbb F_{q^2}$.

$$\begin{bmatrix}
  0 & 0 \\
  0 & 0 \\
\end{bmatrix},
\begin{bmatrix}
  1 & 0 \\
  0 & 0 \\
\end{bmatrix},
\begin{bmatrix}
  0 & 1 \\
  0 & 0 \\
\end{bmatrix},
\begin{bmatrix}
  1 & \zeta \\
  0 & 0 \\
\end{bmatrix}.$$

  Observe that there are $q^2+2$ such equivalence classes of matrices up to unitary similarity. 
  
  Note that Lemmas \ref{lem:hAalt} and \ref{lem:translate} state that the boundary generating curve is also preserved under these transformations that reduce our matrices to these equivalence class representations.  We will study each of the situations given by these equivalence classes in our remaining sections.

\section{Circular Boundary Generating Curve}\label{s:circles}

In this section we will study matrices equivalent to $A=\begin{bmatrix}
         0 & 1 \\
         0 & 0 \\
       \end{bmatrix}$.  Let $z\in W(A)$, so there exists some unit vector $\mathbf v=\begin{bmatrix}
                                         e \\
                                         f \\
                                       \end{bmatrix}$ such that $\scal{A\mathbf v,\mathbf v}=z$.  We write $\mathbf w'$ to denote the following involution of our vector:  $\begin{bmatrix}
                                         \overline f \\
                                         \overline e \\
                                       \end{bmatrix}$.  Furthermore, let $k:=|e|^2$, so that $|f|^2 = 1-k$.  Then $z=\scal{A\mathbf v,\mathbf v}=\overline e f = \scal{A\mathbf v',\mathbf v'}$.  Observe that for fixed $k$, we have that $z$ is the set of all points on the circle centered at the origin of radius-squared $k(1-k)$. 

Firstly, when $k\neq 1-k$, we have two distinct unit vectors (which are not unitary multiples of each other) mapping to the same point $z$. Recalling that $S_z$ is the pre-image of $z$ under the numerical range map and that $\mathcal U$ is the set of all unitary scalars, we obtain that $|S_z/\mathcal U|\geq 2$ when $k\neq 1/2$.

Assume now that $k=1-k$.  Then $|e|^2 = |f|^2=1/2$, so there exists some unitary scalar $u\in \F_{q^2}$ for which $e = u \overline f$. Then we have that $\vv=u\vv'$.  Additionally, since $k=1/2$ we have that $z$ is on the circle centered at the origin of radius-squared $1/4$.  

We compute $\gfw$ in this context.  We have $$F_A(x,y,t)=\det\left(
x\begin{bmatrix}
0 & \frac{1}{2} \\
\frac{1}{2}  & 0 \\
\end{bmatrix}+
y\begin{bmatrix}
   0 & \frac{1}{2\beta} \\
   \frac{-1}{2\beta} & 0 \\
 \end{bmatrix}+
 t\begin{bmatrix}
    1 & 0 \\
    0 & 1 \\
  \end{bmatrix}
  \right)\\
  =-\frac{1}{4}\left(x^2-\frac{y^2}{\alpha}\right)+t^2.$$
To compute the dual, we begin with the partial derivatives:  $F_x(x,y,t)=-x/2$, $F_y(x,y,t)=y/(2\alpha)$, $F_t(x,y,t)=2t$.  Using our formula for $F$, we obtain $\widehat{F}(x,y,t)=\frac{t^2}{4}-x^2+\alpha y^2$.  Hence our affine curve is the circle $\Gamma_{F}^\wedge: x^2-\alpha y^2 = \frac{1}{4}$.  Now, observe that this circle contains exactly the points $z$ for which $|z|^2 = \frac{1}{4}$.  This is exactly the points described in the preceding paragraph.  By the first part of Theorem \ref{thm:densityF}, $|S_z/\mathcal U|=1$ for any $z$ on this circle.  

Recall that in our earlier case for when $k\neq 1-k$ we had $|S_z/\mathcal U|\geq 2$ and $|z|^2=k(1-k)$.  We see that these points are on scalings of the circle $\gfw$.  By \cite[Lemma 3.5]{CJKLR} and \cite[Proposition 1]{Ballico1}, the numerical range is exactly the union of $(q-1)/2$ of these circles (plus the origin, which can be considered a degenerate circle).  

Let $S$ denote the entire unit sphere in $\mathbb F_{q^2}^2$, so that $S=\bigcup_{z\in W(A)} S_z$.  From \cite[Lemma 2.4]{CJKLR} and \cite[Proof of Lemma 3]{Ballico2} we have that $|S| = q^{3}-q$ and $|\mathcal U| = q + 1$, so that $|S/\mathcal U|=q(q-1)$.  By Remark \ref{lem:CPoints}, there are exactly $q+1$ points on $\gfw$, so we have exactly $q+1$ equivalence classes (up to unitary multiples) of vectors on $S$ mapping to $\gfw$.

Now consider the $(q-3)/2$ other circles making up the numerical range, plus the origin.  As each circle has $q+1$ points, we see that there are at least $\frac{2(q+1)(q-3)}{2}+2$ equivalence classes (up to unitary scalar multiples) of vectors mapping to these points in $W(A)\setminus \gfw$. Combining these with the aforementioned $q+1$ equivalence classes of vectors mapping to $\gfw$, we have at least $q(q-1)$ such equivalence classes, and will have more if $|S_z/\mathcal U|>2$ for some $z\in W(A)$.  However, since $|S/\mathcal U|=q(q-1)$ is our upper bound, we see that $|S_z/\mathcal U|\leq 2$ for each $z\in W(A)$.

This concludes the proofs of this case within Theorem \ref{thm:scalings} and Theorem \ref{thm:densityF}.

\section{General Conical Boundary Generating Curves}\label{s:conics}
In this section we will classify the numerical range for matrices of the form $A=\begin{bmatrix}
         1 & \zeta \\
         0 & 0 \\
       \end{bmatrix}$ that have nonsingular boundary generating curves.  We will first study the boundary generating curve $\gfw$, and then discuss the density of points in the numerical range $W(A).$
       
\subsection{Scalings of the Boundary Generating Curve}\label{ss:ellipses}
       Following the same technique for computing the boundary generating curve as in Section \ref{s:circles}, we obtain the following.

\begin {eqnarray}
\label{eqn:fa2}
F_A(x,y,t)=  xt + t^2 -\frac{1}{4}x^2|\zeta|^2 + \frac{1}{4\alpha}y^2|\zeta|^2\\
\label{eqn:gfw}
\Gamma_{F}^\wedge: \frac{\left(x - \frac{1}{2}\right)^2}{1 + |\zeta|^2} - \alpha \frac{y^2}{|\zeta|^2} = \frac{1}{4}
\end {eqnarray}

Note that when $|\zeta|^2=-1$ or $|\zeta|^2 = 0$, the boundary generating curve is singular.  These situations will be discussed in the Section \ref{s:exceptional}.  In the situation that $\left(1+|\zeta|^2\right)|\zeta|^2$ is a perfect square (respectively nonsquare) in $\F_q$, observe that $\gfw$ is an ellipse (resp. hyperbola).  We will return to this in Subsection \ref{ss:7.2}.

In this section we will study scalings of the boundary generating curve in Equation \ref{eqn:gfw}  and how they relate to the numerical range. We will begin by analyzing vectors in the pre-image of the numerical range map, and which scalings of the boundary generating curve they correspond to.

Let $[a, b]^T$ be a unit vector in $\Fext^2$ that maps to some $x + \beta y$ in the numerical range $W(A)$. Then $x + \beta y = |a|^2 + \overline{a}b\zeta$. Let $k := |a|^2$ so that $|b|^2=1-k$. We can easily compute that $x = k + \textmd{Re}(\overline{a}b\zeta)$ and $y = \textmd{Im}(\overline{a}b\zeta)$. We will show that this point $(x,y)$ is on the circle centered at $(k,0)$.  We compute

\begin{eqnarray*}
(x - k)^2 - \alpha y^2 & = & \textmd{Re}(\overline{a}b\zeta)^2 - \alpha \textmd{Im}(\overline{a}b\zeta)^2 \\
& = & |\overline{a}b\zeta|^2.
\end{eqnarray*}

In conclusion, 
\begin{equation}\label{eq:solvek} 
    (x - k)^2 - \alpha y^2 - k(1 - k)|\zeta|^2 = 0.
\end{equation}

Now, let $j$ be the unique number satisfying the following equation, whose origins will become more clear in the next subsection.  
\begin{equation}\label{eq:ksimp}
    \left(\frac{|\zeta|^2 + 1}{2}\right)(k + j) - \frac{|\zeta|^2}{2}= x  
\end{equation}

We define 
\begin{equation}\label{eq:Cm}
    \mathcal{C}_m: \frac{\left(x-\frac{1}{2}\right)^2}{1+|\zeta|^2} - \alpha \frac{y^2}{|\zeta|^2} = \frac{m}{4}, 
\end{equation} so that $\mathcal C_1$ is the  conic boundary generating curve. We will use Equations \ref{eq:solvek} and \ref{eq:ksimp} to determine $m$.  

Combining Equations \ref{eq:solvek} and \ref{eq:Cm} yields the equation
$$\frac{(x-k)^2}{|\zeta|^2} - k(1-k) - \frac{(x-\frac{1}{2})^2}{1+|\zeta|^2}= -\frac{m}{4}.$$ Since we know the values of quantities $x-\frac{1}{2}$ and $x-k$ from Equation \ref{eq:ksimp}, we may substitute as follows $$\frac{\Big({j(|\zeta|^2+1)+k(|\zeta|^2-1)-|\zeta|^2}\Big)^2}{|\zeta|^2} - 4k(1-k) - \frac{\Big({(|\zeta|^2+1)(k+j)-|\zeta|^2-1}\Big)^2}{1+|\zeta|^2}= -m.$$

This simplifies considerably to the following formula for $m$. 
\begin{equation}\label{eq:m}
    m = 1-(j-k)^2\frac{|\zeta|^2+1}{|\zeta|^2}
\end{equation}

Observe that $(j-k)^2$ ranges over all squares in $\mathbb F_{q}$, including zero.  Since $m$ is linear in terms of $(j-k)^2$, we can see that there are $(q+1)/2$ possible values of $m$, and thus $(q+1)/2$ scalings of the boundary generating curve. We have proven that our general point $x+\beta y$ in the numerical range $W(A)$ is on one of the $(q+1)/2$ scalings of the boundary generating curve.  We conclude that the numerical range is a subset of these scalings.
 
  \subsection{Curve Density}\label{ss:7.2}
 
 Recall from the beginning of the previous subsection that if $|\zeta|^2 (1+|\zeta|^2)$ is a square in $\mathbb F_{q}$ then the boundary generating curve is an ellipse.  This ellipse, and all its scalings, will contain $q+1$ points by Remark \ref{lem:CPoints}.  Furthermore, in this situation we see that $m$ in Equation \ref{eq:m} can be 0, thus one of the scalings of the boundary generating curve is actually just one point (the center of the ellipse, $\frac{1}{2}+0\beta$).  If $|\zeta|^2 (1+|\zeta|^2)$ is a nonsquare in $\mathbb F_{q}$, then the boundary generating curve is a hyperbola containing $q-1$ points by Remark \ref{lem:CPoints}.  Note that Equation \ref{eq:m} is always nonzero in this situation. 
 
 We will now show that $|S_z/\mathcal U| \leq 2$ for all points $z$ in the numerical range.  Write $z=x+\beta y$ and assume for sake of contradiction there exist three distinct unit vectors $[a, b]^T, [c, d]^T$, $[e, f]^T$, that map to $z$, with none of the three vectors being unitary multiples of another. Let $k := |a|^2, j := |c|^2$, and $h := |e|^2$. We follow the same as in the previous subsection to prove Equation \ref{eq:solvek} in this situation, and by symmetry we also have $(x - j)^2 - \alpha y^2 = j(1 - j)^2|\zeta|^2$.  Combining these two equations yields 
\begin {equation*} (x - k)^2 - (x - j)^2 = \Big(k(1 - k) - j(1 - j)\Big)|\zeta|^2. \end {equation*}   This equation simplifies considerably, to result in exactly Equation \ref{eq:ksimp}.  By symmetry we also obtain a similar equation involving $k$ and $h$:
 \begin{equation*}
    \left(\frac{|\zeta|^2 + 1}{2}\right)(k + h) - \frac{|\zeta|^2}{2}= x.  
\end{equation*}
 Comparing this equation and Equation \ref{eq:ksimp} results in $h=j$.  Thus $|c|^2=|e|^2$, so there exists some unitary $u\in \F_q$ (i.e. $|u|^2=1$) for which $cu=e$.  Since $[c,d]^T$ and $[e,f]^T$ both map to $x+\beta y$ under the numerical range map, we have that $\overline c d\zeta=\overline e f \zeta$.  Hence $\overline c d =\overline c \overline u f$, so that $du=f$. We conclude that $[e,f]^T$ is a unitary multiple of $[c,d]^T$.  This contradicts the assumption that we had made for the sake of contradiction.  We conclude that $|S_z/\mathcal U| \leq 2$.
 
  We pause briefly to recall from \cite[Lemma 2.4]{CJKLR}, \cite[Proof of Lemma 3]{Ballico2} the number of vectors on the unit sphere $S$ and the number of unitary scalars $\mathcal U$.  They give us $|S| = q^{3}-q$ and $|\mathcal{U}| = q + 1$, so that the number of equivalence classes up to unitary multiples is $|S/\mathcal U|=q(q-1)$. 
  
 Recall that Theorem \ref{thm:densityF} told us that $|S_z/\mathcal U| = 1$ for any point $z$ on $\gfw$. Furthermore, the final paragraph of Subsection \ref{ss:ellipses} stated that there are $(q - 1)/2$ other scalings of this boundary generating curve inside the numerical range $W(A)$.   A priori for any $z$ on one of these scalings, we only know that $0 \leq |S_z/\mathcal U| \leq 2$. Recall that in the ellipse case, one of the ellipses will be scaled by a factor of 0, i.e. scaled down to a point.
 
 In the ellipse case, this translates into having $(q-3)/2$ nontrivial scalings of the boundary generating curve, each with $q+1$ points, that in turn each have at most two equivalence classes of vectors in $S/\mathcal U$ mapping to them.  Together with at most two equivalence classes mapping to the trivial ellipse and exactly $q+1$ equivalence classes mapping to the boundary generating curve, we have that the maximum number of equivalence classes of vectors mapping to the numerical range $W(A)$ is 
 $$|S/\mathcal U|\leq \frac{q - 3}{2}(2)(q + 1) +  2(1) + (q + 1), $$ with inequality being possible only if $|S_z/\mathcal U|<2$ for some $z\in W(A)\setminus \gfw$.  However, this number sums to $q(q-1),$ which is exactly $|S/\mathcal U|$.  
 
 In they hyperbola case, a similar computation yields the same result: 
 \begin{eqnarray*}
    |S/\mathcal U| & \leq & \frac{q - 1}{2}(2)(q - 1) + (q - 1)\\
    &= & q(q - 1)\\ & = &|S/\mathcal U|.
\end{eqnarray*}

In both cases we find that for any $z\in W(A)$ such that $z\notin \gfw$, we have $|S_z/\mathcal U| = 2$, and equivalently that $|S_z| = 2q + 2$. We conclude that $$\bigcup_{m\in S}^{} \mathcal{C}_{m} = W(A),$$ where $$S:=\left\{1-t^2\frac{(|\zeta|^2+1)}{|\zeta|^2}\ : \ t\in \F_q\right\}.$$
 
 \subsection{Proof of Theorems \ref{thm:scalings} and \ref{thm:densityF}}
 Together, Sections \ref{s:circles} and \ref{s:conics} show that the numerical range in the nonsingular case is the set of $(q + 1)/2$ multiples of the boundary generating curve (including the boundary generating curve itself), with 2 equivalence classes of vectors mapping to points off of the boundary generating curve, and 1 equivalence class mapping to each point on the boundary generating curve. Together with our proof of the first part of Theorem \ref{thm:densityF} in Section \ref{s:densitygfw}, this completes the proof of Theorem \ref{thm:scalings} and Theorem \ref{thm:densityF}.

\section{Singular Cases}\label{s:exceptional}

 We will now discuss the situations of matrices $A$ for which $F_A$ is singular.  

\subsection{Exceptional Case}
We begin by returning to the class of matrices studied in Section \ref{s:conics}, and studying what occurs when the boundary generating curve found there is singular.  The following lemma begins to describe these; Lemma \ref{lem:eigenvalue} or direct computation proves that the boundary generating curve is singular for this infinite family of matrices. 

\begin{lem}
Let $A = \begin{bmatrix}
         1 & \zeta \\
         0 & 0 \\
       \end{bmatrix}$ with $|\zeta|^2 = -1$. Then all rotation-scalings of $A$ do not have an eigenvector $\textbf{u}$ such that $\scal{\textbf{u}, \textbf{u}} \neq 0$.
\end{lem}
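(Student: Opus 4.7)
The plan is to reduce the statement to a direct computation on the Hermitian part of an arbitrary rotation-scaling, which is the object controlling both Lemma \ref{lem:schur} and the hypothesis of Theorem \ref{thm:scalings}. First I would reduce to the case of a pure scaling $B=\rho A$ with nonzero $\rho\in\F_{q^2}$: a translation $\tau I$ only shifts the eigenvalues of $\operatorname{Re}(B)$ by $\operatorname{Re}(\tau)$, leaving its eigenspaces alone.

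Next, writing $\rho=a+\beta b$ with $a,b\in\F_q$, I would compute
$$H_1:=\operatorname{Re}(\rho A)=\begin{bmatrix}a & \rho\zeta/2 \\ \bar\rho\bar\zeta/2 & 0\end{bmatrix},$$
whose characteristic polynomial, after applying $|\zeta|^2=-1$, is $\lambda^2-a\lambda+|\rho|^2/4$; its discriminant simplifies to $a^2-|\rho|^2=\alpha b^2$, so the eigenvalues are $\rho/2$ and $\bar\rho/2$. Applying the standard formula $\vv=[-h_{12},h_{11}-\lambda]^T$ for an eigenvector of a $2\times 2$ Hermitian matrix produces $\uu_+=[-\rho\zeta,\bar\rho]^T$ (up to a nonzero scalar) for $\lambda=\rho/2$, and symmetrically $\uu_-=[-\bar\rho\zeta,\rho]^T$ for $\lambda=\bar\rho/2$. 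A one-line evaluation using $|\zeta|^2+1=0$ then gives $\scal{\uu_\pm,\uu_\pm}=|\rho|^2|\zeta|^2+|\rho|^2=|\rho|^2(|\zeta|^2+1)=0$.

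The main obstacle --- and really the only place the argument is not routine --- is the degenerate case $b=0$, i.e.\ $\rho\in\F_q$, where the two eigenvalues coincide at $a/2$. There one must rule out the possibility that $H_1$ is a scalar multiple of the identity, which would admit eigenvectors of nonzero self-pairing. But the off-diagonal entry $a\zeta/2$ is nonzero, so $H_1-(a/2)I$ has rank exactly one; the eigenspace is one-dimensional, spanned by $[-\zeta,1]^T$, which again satisfies $\scal{\cdot,\cdot}=|\zeta|^2+1=0$. This handles every rotation-scaling and proves that no such matrix has an eigenvector of nonzero self-pairing, as needed to classify this family as exceptional.
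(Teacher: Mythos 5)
Your argument is essentially the paper's own proof: both reduce to the Hermitian part $\operatorname{Re}(\rho A)$ of an arbitrary rotation-scaling, compute its two eigenvalues ($\rho/2$ and $\bar\rho/2$ in your parametrization), and check that every eigenvector has self-pairing a multiple of $|\zeta|^2+1=0$. One small slip: your $\uu_-=[-\bar\rho\zeta,\rho]^T$ is not actually an eigenvector for $\bar\rho/2$ when $\rho\notin\F_q$ --- the formula $[-h_{12},h_{11}-\lambda]^T$ gives $[-\rho\zeta/2,\rho/2]^T\sim[-\zeta,1]^T$ --- but the correct eigenvector still satisfies $\scal{\uu_-,\uu_-}=|\zeta|^2+1=0$, so the conclusion is unaffected; your explicit handling of the repeated-eigenvalue case $\rho\in\F_q$ is a detail the paper leaves implicit.
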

\begin{proof}
We have $\textmd{Re}\big((k+\beta \ell)A\big) = kH_1- \alpha\ell H_2$. The eigenvalues of this Hermitian matrix are
$\lambda_\pm = \frac{k\beta\pm \ell}{2\beta}$ and
their respective eigenvectors are
$\begin{bmatrix}
-\zeta(k\beta - \ell)  \\
k\beta + \ell
\end{bmatrix}$ and
$\begin{bmatrix}
-\zeta \\
1
\end{bmatrix}$. Both eigenvectors $\vv$ satisfy $\scal{\vv,\vv}=0$.  
\end{proof}

In \cite[Example 3.7]{CJKLR}, this class of matrices was described as having the numerical range of the entire plane $\F_{q^2}$ minus one line (see \cite[Figure 2]{CJKLR}), but the geometric description (as a collection of coincidentally non-overlapping circles) was lacking.  We provide a new version of that graphic in Figure \ref{fig:new}, where the dotted line is the boundary generating curve and the marked points are the eigenvalues of the matrix.  

\begin{figure}[h!]
  \includegraphics[width=8cm]{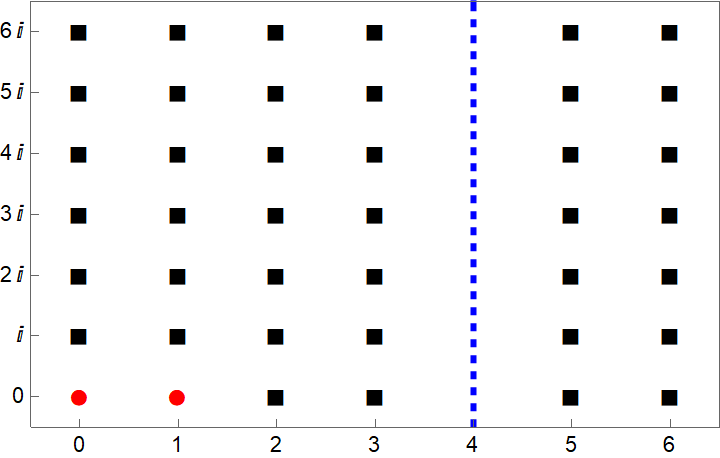}
  \caption{$\zeta=4+5\sqrt{-1}$ over $\mathbb Z_7[\sqrt{-1}]$.  The dotted line is $\gfw$, and the marked points are the eigenvalues.}\label{fig:new}
\end{figure}

We now provide a geometric interpretation of that situation, using the following proposition.  

 \begin {prop}\label{pp:zetanegone}
 Let $A = \begin{bmatrix}
         1 & \zeta \\
         0 & 0 \\
       \end{bmatrix}$. Then $|\zeta|^2 = -1$ iff $(\gfw \cup W(A)) = \F_{q^2}$ and $(\gfw \cap W(A)) = \varnothing$.
 \end {prop}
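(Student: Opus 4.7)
The plan is to prove the biconditional by handling each direction separately. For the forward direction, suppose $|\zeta|^2=-1$. Substituting into Equation \ref{eqn:fa2} gives
\begin{equation*}
F_A(x,y,t)=\left(t+\tfrac{x}{2}\right)^2-\tfrac{y^2}{4\alpha},
\end{equation*}
which factors over $\F_{q^2}$ as a product of two linear forms (using $\sqrt{\alpha}=\beta$), so $\gf$ is a union of two lines meeting at the singular point $P=(2:0:-1)$. Since $\gf$ is degenerate, I would compute $\gfw$ via the standard convention: it consists of the duals of all lines through $P$, namely the projective points $(a:b:c)$ satisfying $c=2a$. Taking the $\F_q$-affine points of this projective line yields $\gfw=\{\tfrac{1}{2}+\beta b:b\in \F_q\}$, the vertical line $\re z=1/2$.

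I would next compute $W(A)$ directly. Writing a unit vector $[a,b]^T$ and setting $k:=|a|^2\in \F_q$, the derivation in Section \ref{ss:ellipses} yields $(x-k)^2-\alpha y^2=k(1-k)|\zeta|^2$; with $|\zeta|^2=-1$ this reduces to $k(2x-1)=x^2-\alpha y^2$. For $x\neq 1/2$ this determines a unique $k\in \F_q$, and surjectivity of the norm map $N:\F_{q^2}^*\to \F_q^*$ lets us realize $k=|a|^2$ and $1-k=|b|^2$; a direct check using $\bar a b\zeta=(x-k)+\beta y$ confirms the vector indeed maps to $x+\beta y$, so $(x,y)\in W(A)$. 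For $x=1/2$ the equation forces $\alpha y^2=1/4$, which has no solution with $y\in \F_q$ because $\alpha$ is a nonsquare. Hence $W(A)=\F_{q^2}\setminus\{\re z=1/2\}=\F_{q^2}\setminus \gfw$, yielding both set-theoretic conclusions at once. Alternatively one may invoke \cite[Example 3.7]{CJKLR} for the fact that $W(A)$ is $\F_{q^2}$ minus a line, and then use the explicit $\gfw$ computation above to identify that line.

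For the converse I would argue the contrapositive: if $|\zeta|^2\neq -1$ then $\gfw\cap W(A)\neq \varnothing$. When $|\zeta|^2\notin \{0,-1\}$, Equation \ref{eqn:gfw} describes a nondegenerate ellipse or hyperbola, so $F_A$ is nonsingular; Proposition \ref{thm:boundary} then gives $\gfw\subseteq W(A)$, and Remark \ref{lem:CPoints} ensures $\gfw$ contains at least $q-1>0$ affine points. When $\zeta=0$, we have $A=\operatorname{diag}(1,0)$ with $W(A)=\F_q$ and $F_A=t(x+t)$ again degenerate (singular point $(0:1:0)$); the same duals-of-lines-through-singular-point calculation yields $\gfw=\F_q$, so in particular $0,1\in \gfw\cap W(A)$.

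The main obstacle is step one of the forward direction: correctly identifying $\gfw$ when $\gf$ is a pair of lines. The naive Gauss-map dual of a pair of lines yields only two projective points (dual to the two individual lines), neither of which is $\F_q$-affine under the embedding used for the numerical range, so under that interpretation $\gfw$ would be empty in the picture and the proposition would fail. The resolution, consistent with the depiction in Figure \ref{fig:new}, is the standard convention that the dual of a degenerate conic includes all duals of lines through the singular vertex. Once that is fixed, the remaining arithmetic is routine, with the only subtle input being that $\alpha$ is a nonsquare in $\F_q$, which is exactly what excludes the line $\re z=1/2$ from $W(A)$.
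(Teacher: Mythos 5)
Your proof is correct, but the forward direction takes a genuinely different route from the paper's. The paper never computes $W(A)$ directly: it first shows $\gfw\cap W(A)=\varnothing$ by intersecting the fiber relation of Equation \ref{eq:solvek} with the line $x=1/2$ and hitting the same obstruction $\alpha y^2=1/4$ that you use, then shows via Equation \ref{eq:ksimp} that any two non-unitarily-equivalent preimages of a common point would force $x=1/2$, so every fiber off the line has at most one unitary class, and finally counts: $|S/\mathcal U|=q(q-1)$ classes each hitting a distinct point, together with the $q$ points of the line, exhausts $\F_{q^2}$. You instead solve $k(2x-1)=x^2-\alpha y^2$ uniquely for $k$ at each $x\neq 1/2$ and explicitly build a unit preimage (choosing $a$ with $|a|^2=k$ by norm surjectivity and then defining $b$ by $\overline a b\zeta=(x-k)+\beta y$, with $|b|^2=1-k$ following automatically), which gives surjectivity onto the complement of the line with no counting at all. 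The trade-off: the paper's counting argument simultaneously yields $|S_z/\mathcal U|\le 1$ off the curve, which feeds the subsequent corollary, whereas your construction would need a small separate uniqueness argument to recover that; your argument, on the other hand, is more self-contained and makes the exact image visible. Your identification of $\gfw$ as the double line $x=1/2$ via the pencil of lines through the singular vertex $(2:0:-1)$ is precisely the convention the paper uses implicitly when it clears the vanishing denominator in Equation \ref{eqn:gfw} to get $(x-\tfrac12)^2=0$, so that is a clarification rather than a divergence. In the converse you are in fact slightly more careful than the paper, which asserts that $|\zeta|^2\neq-1$ forces $F_A$ nonsingular and thereby silently passes over $\zeta=0$; your separate treatment of $A=\diag(1,0)$, where $\gfw=W(A)=\F_q$, closes that small gap.
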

 
 \begin {proof}
  Firstly, let $\zeta$ satisfy $|\zeta|^2 = -1$. Assume for sake of contradiction that there exists some $z=x+\beta y  \in \gfw\cap W(A)$. Then there exists a vector $[a, b]^T$ with $k := |a|^2$ that maps to $z$ and satisfies Equation \ref{eq:solvek}.  Since $|\zeta|^2 = -1$, the equation simplifies to \begin{equation}\label{eq:circleexeptional}x^2 - \alpha y^2 = k(2x - 1).\end{equation}
 We compute that $\Gamma_{F}^\wedge: (x - \frac{1}{2})^2 = 0$ in this singular case, which is the line $x = 1/2$. Intersecting this with Equation \ref{eq:circleexeptional} we have $\frac{1}{4} - \alpha y^2 = 0$. Thus $\beta y=\pm 1/2$, but then $1/2\notin \F_q$.  This contradiction tells us that the numerical range and boundary generating curve share no elements.
 
 Now assume for sake of contradiction that there exist a pair of vectors $[a, b]^T$ and $[c, d]^T$, which are not unitary multiples of each other, that map to the same point $z=x+\beta y\in W(A)$.  Let $k=|a|^2$ as before, and let $j=|c|^2$.  Equation \ref{eq:ksimp} simplifies considerably in our situation to $x = 1/2$. Since this is precisely the boundary generating curve, and (from the preceding paragraph) no points are on both the boundary generating curve and in the numerical range, we arrive at a contradiction. Thus $|S_z/\mathcal{U}| \leq 1$ for any $z\notin \gfw$.  
 
 From here, we have $|\Fext| = q + (q^2 - q) = |\{(x, y) : x = \frac{1}{2}\}| + |S/\mathcal U| = |\gfw| + |W(A)|$. We conclude that $\gfw$ and $W(A)$ partition $\Fext$ into two subsets. 
 
 Lastly, we note that if $|\zeta|^2\neq -1$, then $F_A$ is nonsingular and the results from our earlier sections shows us that $\gfw\subseteq W(A)$.
 \end{proof}
 
 For matrices satisfying the $|\zeta|^2=-1$ condition of Proposition \ref{pp:zetanegone}, we can also also relate the singularities of $\gfw$ and $\Gamma_F$.  By computing partial derivatives in Equation \ref{eqn:fa2}, we find that $(-2:0:1)$ is the singular point of $\Gamma_F$.  This by itself would guarantee that the boundary generating curve $\gfw$ is a double line.

 The following corollary follows trivially.
 \begin{cor}
 Let $A = \begin{bmatrix}
         1 & \zeta \\
         0 & 0 \\
       \end{bmatrix}$. Then the following are equivalent.
       \begin{enumerate}\renewcommand{\labelenumi}{\alph{enumi})}
           \item $|\zeta|^2 = -1$ 
           \item $|S_z/\mathcal U| = 1$ when $z \notin \gfw$
           \item $S_z=\varnothing$ when $z\in \gfw$
        \end{enumerate}
 \end{cor}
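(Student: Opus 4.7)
The plan is to read this corollary directly off of Proposition \ref{pp:zetanegone} and Theorem \ref{thm:densityF}, splitting into the implications $(a)\Rightarrow(b)$, $(a)\Rightarrow(c)$, $\neg(a)\Rightarrow\neg(b)$, and $\neg(a)\Rightarrow\neg(c)$.

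For $(a)\Rightarrow(c)$: assuming $|\zeta|^2=-1$, Proposition \ref{pp:zetanegone} yields $\gfw\cap W(A)=\varnothing$, so any $z\in\gfw$ has no preimage under the numerical range map, giving $S_z=\varnothing$. For $(a)\Rightarrow(b)$: the same proposition yields $\gfw\cup W(A)=\F_{q^2}$, so any $z\notin\gfw$ lies in $W(A)$, forcing $|S_z/\mathcal U|\geq 1$; combining this with the inequality $|S_z/\mathcal U|\leq 1$ that was established in the proof of Proposition \ref{pp:zetanegone} (via the simplification of Equation \ref{eq:ksimp} to $x=1/2$) gives equality.

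For the reverse directions, I would argue the contrapositives together. Assume $|\zeta|^2\neq -1$. Note first that $|\zeta|^2=0$ with $\zeta\neq 0$ cannot occur in our setup: writing $\zeta=a+\beta b$, one has $|\zeta|^2=a^2-\alpha b^2$, and this vanishes only when $a=b=0$, since $\alpha$ is a non-square in $\F_q$. Hence $|\zeta|^2\notin\{0,-1\}$, so by the computation of $F_A$ in Equation \ref{eqn:fa2} the polynomial $F_A$ is nonsingular. Then Theorem \ref{thm:densityF} applies and gives $|S_z/\mathcal U|=2$ for $z\notin\gfw$ (contradicting (b)) and $|S_z/\mathcal U|=1$ for $z\in\gfw$, in particular $S_z\neq\varnothing$ (contradicting (c)).

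There is no real obstacle to overcome here; as the paper notes, the statement is essentially a repackaging of Proposition \ref{pp:zetanegone} alongside the nonsingular case treated by Theorem \ref{thm:densityF}. The only subtle point is the verification that the singular case $|\zeta|^2=0$ does not sneak in as a separate possibility, which is handled by the observation above that the Hermitian norm is anisotropic on $\F_{q^2}$ over $\F_q$.
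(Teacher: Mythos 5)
Your proposal is correct and follows the route the paper intends: the paper dismisses this corollary as following ``trivially'' from Proposition \ref{pp:zetanegone} together with the nonsingular-case density results, and your write-up simply makes that deduction explicit (forward directions from the proposition, reverse directions via Theorem \ref{thm:densityF} after ruling out $|\zeta|^2=0$ by anisotropy of the norm form). The only details you leave tacit --- that $\mf e_1$ is an eigenvector of $A$ with $\scal{\mf e_1,\mf e_1}=1$ so the hypothesis of Theorem \ref{thm:densityF} holds, and that $\gfw$ and $W(A)\setminus\gfw$ are nonempty so the contradictions to (b) and (c) actually materialize --- are immediate.
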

 
\subsection{Unitarily Reducible Matrices}

In our final two remaining cases, we consider the matrices equivalent to $\begin{bmatrix}
         1 & 0 \\
         0 & 0 \\
       \end{bmatrix}$ and $\begin{bmatrix}
         0 & 0 \\
         0 & 0 \\
       \end{bmatrix}$.  

Let $A=\begin{bmatrix}
         1 & 0 \\
         0 & 0 \\
       \end{bmatrix}$ in $\F_{q^2}$.
      We compute
$(x + t)t = xt + t^2.$
Thus $\Gamma_F$ is the union of the affine $x+t=0$ and the line at infinity $t=0$.  Their intersection point is the singularity $(0:1:0)$, whose dual is the affine line $y=0$.  This line is exactly $\gfw$.  
The duals of the two lines $x+t=0$ and $t=0$ are respectively the points $0 + 0\beta$ and $1 + 0\beta$ on our singular boundary generating curve $\gfw$.

We will now calculate $|S_z/\mathcal U|$ for each of the points in $W(A)$, namely all of $\F_q$. The vector $[a,b]^T$ maps to $|a|^2$.  Thus $S_1=\mf e_1 \mathcal U$ and $S_0=\mf e_2 \mathcal U$.  Now let $z\in \F_q$ be other than zero and one and let $[a,b]^T\in S_z$.  Write $a=c+\beta d$, so that $z=c^2-\alpha d^2$.  The set of solutions for $c,d$ is an ellipse, so by Remark \ref{lem:CPoints}, there are $q + 1$ choices for $a$ which satisfy this equation.  (The choice of $b$ is inconsequential, as we are considering vectors up to unitary scalar multiples.)  We conclude that $|S_z/\mathcal U|=q+1.$

Overall, we have $q - 2$ points $z\in \F_q$ of density $|S_z/\mathcal U|=q+1$, two points $0,1$ of density 1, which of course adds up to the $q(q - 1)$ total equivalence classes in $S/\mathcal U$. 

Our final situation is the zero matrix. We have $$W\left(\begin{bmatrix}
         0 & 0 \\
         0 & 0 \\
       \end{bmatrix}\right)=\{0 + 0\beta\},$$ and the boundary generating curve of this zero matrix is singular and consists of exactly that one point. Since the numerical range is just the origin, we have $|S_0/\mathcal{U}| = q(q - 1)$.

This completes our classification of finite field numerical ranges of matrices within the equivalence classes listed in Section \ref{ss:equivalenceclasses}.

\bibliographystyle{amsplain}
\bibliography{master}

\providecommand{\bysame}{\leavevmode\hbox to3em{\hrulefill}\thinspace}
\providecommand{\MR}{\relax\ifhmode\unskip\space\fi MR }
\providecommand{\MRhref}[2]{%
  \href{http://www.ams.org/mathscinet-getitem?mr=#1}{#2}
}
\providecommand{\href}[2]{#2}
\begin{thebibliography}{10}

\bibitem{Ballico1}
E.~Ballico, \emph{On the numerical range of matrices over a finite field},
  Linear Algebra and its Applications \textbf{512} (2017), 162--171.

\bibitem{Ballico2}
E.~Ballico, \emph{Corrigendum to ``{O}n the numerical range of matrices over a
  finite field'' [{L}inear {A}lgebra {A}ppl. 512 (2017) 162-171}, Linear
  Algebra and its Applications \textbf{556} (2018), 421--427.

\bibitem{CDRSSY}
K.~Camenga, L.~Deaett, P.X. Rault, T.~Sendova, I.~Spitkovsky, and R.~Yates,
  \emph{Singularities of base polynomials and {G}au-{W}u numbers}, Linear
  Algebra and its Applications \textbf{581} (2019), 112--127.

\bibitem{ChiNa12}
M.-T. Chien and H.~Nakazato, \emph{Singular points of the ternary polynomials
  associated with 4-by-4 matrices}, Electron. J. Linear Algebra \textbf{23}
  (2012), 755--769.

\bibitem{CJKLR}
J.I. Coons, D.~Knowles, J.~Jenkins, R.~Luke, and P.X. Rault, \emph{Numerical
  ranges over finite fields}, Linear Algebra and its Applications \textbf{501}
  (2016), 37--47.

\bibitem{Hirschfeld}
J.~Hirschfeld, \emph{Projective geometries over finite fields}, Oxford
  University Press, New York, 1979.

\bibitem{KRS}
D.~Keeler, L.~Rodman, and I.~Spitkovsky, \emph{The numerical range of $3 \times
  3$ matrices}, Linear Algebra Appl. \textbf{252} (1997), 115--139.

\bibitem{Ki}
R.~Kippenhahn, \emph{{\"{U}}ber den {W}ertevorrat einer {M}atrix}, Math. Nachr.
  \textbf{6} (1951), 193--228.

\bibitem{Lidl}
R.~Lidl and H.~Niederreiter, \emph{Finite fields}, Encyclopedia of Mathematics
  and its Applications, vol.~20, Cambridge University Press, Cambridge, 1996.

\bibitem{Ki08}
{R. Kippenhahn}, \emph{On the numerical range of a matrix}, Linear Multilinear
  Algebra \textbf{56} (2008), no.~1-2, 185--225, Translated from the German by
  Paul F. Zachlin and Michiel E. Hochstenbach.

\end{thebibliography}

\end{document}